\renewcommand{\mathbb}{\mathbf}
\renewcommand{\partial}{d}
\renewcommand{\epsilon}{\varepsilon}
\newcommand{\newjointcountertheorem}[3]{
	\newaliascnt{#1}{#2}
	\newtheorem{#1}[#1]{#3}
	\aliascntresetthe{#1}	
}
\newtheorem*{thm-gauss}{Theorem \ref{gauss}}
\newtheorem*{thm-macmahon}{Theorem \ref{macmahon}}
\newtheorem*{cor-codes}{Corollary \ref{binary codes and matroids}}
\newtheorem{thm}{Theorem}[section]
\theoremstyle{definition}
\DeclareMathOperator{\Cov}{Cov}
\DeclareMathOperator{\des}{des}
\DeclareMathOperator{\inv}{inv}
\DeclareMathOperator{\Var}{Var}
\newcommand{\central}{\mathrm{c}}
\newcommand{\LandauO}{O}
\newcommand{\Landauo}{o}
\newcommand{\qbinom}[2]{\genfrac{[}{]}{0pt}{}{#1}{#2}}
\newcommand{\qmultinom}[2]{\genfrac{[}{]}{0pt}{}{#1}{#2}}
\newcommand{\charac}[1]{\mathrm{ch}(#1)}
\newcommand{\annrel}[2]{\stackrel{\makebox[0pt]{\scriptsize #1}}{#2}} 
\def\Snospace~{\S{}}
\newcommand{\E}[2]{\operatorname{E}_{#1}[#2]}
\numberwithin{equation}{section}
\begin{document}

	\title[Number of flags in finite vector spaces]{The number of flags in finite vector spaces: Asymptotic normality and Mahonian statistics}
	
	\author[Thomas Bliem]{Thomas Bliem}
  	\address{\foreignlanguage{ngerman}{Thomas Bliem\\Kempener Str. 57\\50733 K\"oln\\Germany}}
  	\email{\href{mailto:tbliem@gmx.de}{tbliem@gmx.de}}

  	\author[Stavros Kousidis]{Stavros Kousidis}
  	\address{Stavros Kousidis, Institute for Theoretical Physics, ETH Z\"urich, Wolfgang--Pauli--Strasse 27, CH-8093 Z\"urich, Switzerland\newline \indent Institute of Physics, University of Freiburg, Rheinstrasse 10, 79104 Freiburg, Germany}
	\email{\href{mailto:st.kousidis@googlemail.com}{st.kousidis@googlemail.com}}

	\subjclass[2010]{05A16 Asymptotic enumeration, 60B99 Probability theory on algebraic and topological structures, 94B05 Linear codes, 06B15 Representation theory.}
	\keywords{Galois number, Gaussian normal distribution, MacMahon inversion statistic, Rogers--Szeg\H{o} polynomial, linear code, Demazure module, symmetric group, descent-inversion statistic}
	
	\begin{abstract}
		We study the generalized Galois numbers which count flags of length $r$ in $N$-dimensional vector spaces over finite fields. We prove that the coefficients of those polynomials are asymptotically Gaussian normally distributed as $N$ becomes large. Furthermore, we interpret the generalized Galois numbers as weighted inversion statistics on the descent classes of the symmetric group on $N$ elements and identify their asymptotic limit as the Mahonian inversion statistic when $r$ approaches $\infty$. Finally, we apply our statements to derive further statistical aspects of generalized Rogers--Szeg\H{o} polynomials, re-interpret the asymptotic behavior of linear $q$-ary codes and characters of the symmetric group acting on subspaces over finite fields, and discuss implications for affine Demazure modules and joint probability generating functions of descent-inversion statistics.
	\end{abstract}

	\maketitle
	\tableofcontents

	\section{Introduction}
			
	Let $G_N^{(r)} (q)$ denote the number of flags $0 = V_0 \subseteq \cdots \subseteq V_r = \mathbf{F}_q^N$ of length $r$ in an $N$-dimensional vector space over a field with $q$ elements where repetitions are allowed.
	Then $G_N^{(r)} (q)$ is a polynomial in $q$, a so-called generalized Galois number \cite{v10}.
	In particular, when $r=2$ these are the Galois numbers which give the total number of subspaces of an $N$-dimensional vector space over a finite field \cite{MR0252232}.
		
		The generalized Galois numbers are a biparametric family of polynomials, each with non-negative integral coefficients, in the parameters $N$ and $r$.
		We want to analyze their limiting properties as those parameters become large.
		Viewing each generalized Galois number as a discrete distribution on the real line, we determine the asymptotic behavior of this biparametric family of distributions. Our first result will be:
		
		\begin{thm-gauss}
			For $r\geq2$ and $N \in \mathbf{N}$ let $G_{N,r}$ be a random variable with probability generating function $\E{}{q^{G_{N,r}}} = r^{-N} \cdot G_N^{(r)} (q)$.
			Then,
			\begin{align*}
				\E{}{G_{N,r}} & = \frac{r-1}{4r} N(N-1) , \\ 
				\Var(G_{N,r}) & = \frac{(r-1)(r+1)}{72 r^2} N(N-1)(2N+5) .
			\end{align*}
			For fixed $r$ and $N \rightarrow \infty$, the distribution of the random variable
			\[
				\frac{G_{N,r} - \E{}{G_{N,r}}}{\Var(G_{N,r})^{\frac 12}}
			\]
			converges weakly to the standard normal distribution.
		\end{thm-gauss}
		
		Furthermore, we derive an exact formula in terms of weighted inversion statistics on the descent classes of the symmetric group and derive the asymptotic behavior with respect to the second parameter:
		
		\begin{thm-macmahon}
			Consider the Galois number $G_N^{(r)}(q) \in \mathbf{N}[q]$ for $r\geq2$ and $N \in \mathbf{N}$. Let $\mathfrak{S}_N$ be the symmetric group on $N$ elements, and for a permutation $\pi \in \mathfrak{S}_N$ denote by $\inv(\pi)$ its number of inversions and by $\des(\pi)$ the cardinality of its descent set $D(\pi)$. Then,
			\[
				G_N^{(r)}(q) = \sum_{\pi \in \mathfrak{S}_N}\binom{N+r-1-\des(\pi)}{N}  q^{\inv(\pi)}
				.
			\]
			For fixed $N$ and $r \rightarrow \infty$ we have
			\[
				\frac{N !}{r^N} \cdot G_N^{(r)}(q) \to \sum_{\pi \in \mathfrak{S}_N} q^{\inv(\pi)}  = [N]_q!
				.
			\]			
		\end{thm-macmahon}
		
		To state one application, our exact formula in \autoref{macmahon} allows us to re-interpret the asymptotic behavior of the numbers of equivalence classes of linear $q$-ary codes \cite{MR2177491,MR2492098,MR1755766, MR2191288} under permutation equivalence $(\mathfrak{S})$, monomial equivalence $(\mathfrak{M})$ and semi-linear monomial equivalence $(\Gamma)$ as follows:
		
		\begin{cor-codes}
			The number of linear $q$-ary codes of length $n$ up to equivalence $(\mathfrak{S})$, $(\mathfrak{M})$ and $(\Gamma)$ is given asymptotically, as $q$ is fixed and $n \rightarrow \infty$, by
			 \begin{align}
			 	N_{n,q}^{\mathfrak{S}} & \sim \frac{1}{n!} \sum_{\substack{\pi \in \mathfrak{S}_n \\ \des(\pi) \leq 1}}  \binom{n+1-\des(\pi)}{n} q^{\inv(\pi)} , \\
				N_{n,q}^{\mathfrak{M}} & \sim \frac{1}{n!(q-1)^{n-1}} \sum_{\substack{\pi \in \mathfrak{S}_n \\ \des(\pi) \leq 1}}  \binom{n+1-\des(\pi)}{n} q^{\inv(\pi)}, \\
				N_{n,q}^{\Gamma} & \sim \frac{1}{n!(q-1)^{n-1}a} \sum_{\substack{\pi \in \mathfrak{S}_n \\ \des(\pi) \leq 1}}  \binom{n+1-\des(\pi)}{n} q^{\inv(\pi)} ,
			\end{align}
			where $a = \left| \mathrm{Aut}(\mathbf{F}_q) \right| = \log_p(q)$ with $p = \mathrm{char}(\mathbf{F}_q)$.
			In particular, the numerator of the asymptotic numbers of linear $q$-ary codes is the (weighted) inversion statistic on the permutations having at most $1$ descent. 
		\end{cor-codes}
		
		The organization of our article is as follows. 
		Since the generalized Galois numbers are a specialization of the generalized Rogers--Szeg\H{o} polynomials \cite{v10}, which are generating functions of $q$-multinomial coefficients \cite{r1893a,r1893b,s1926}, we summarize the statistical behavior of the $q$-multinomial coefficients in \autoref{sec:notation-preliminaries}.
		The determination of mean and variance for generalized Galois numbers and of the higher cumulants for $q$-multinomial coefficients in \autoref{sec:gauss}, allow us to prove the asymptotic normality of the generalized Galois numbers (\autoref{gauss}) through the method of moments.
		In \autoref{sec:macmahon} we analyze the combinatorial interpretation of $q$-multinomial coefficients in terms of inversion statistics on permutations \cite{{MR1442260}}. Based on our interpretation of the generalized Galois numbers as weighted inversion statistics on the descent classes of the symmetric group, we describe their limiting behavior towards the Mahonian inversion statistic (\autoref{macmahon}). We conclude with applications of our results in \autoref{sec:applications}. That is, we derive further statistical aspects of generalized Rogers--Szeg\H{o} polynomials in \autoref{covariance Rogers--Szegoe}, re-interpret the asymptotic behavior of the numbers of linear $q$-ary codes in \autoref{binary codes and matroids}, and discuss implications for affine Demazure modules in \autoref{demazure basic specialization} and joint probability generating functions of descent-inversion statistics \eqref{deformed galois}, \eqref{genfunc deformed galois}.
	 
	\section{Notation and preliminaries}
	\label{sec:notation-preliminaries}
		
	We denote by $\mathbf{N}$ the set of nonnegative integers $\{0,1,2,3, \ldots\}$.
	Let $q$ be a variable, $N \in \mathbf{N}$ and ${\bf k} = (k_1 , \ldots , k_r) \in \mathbf{N}^r$. The $q$-multinomial coefficient is defined as
	\begin{equation}
	\label{q-multinomial}
		\qmultinom{N}{{\bf k}}_q = \begin{cases} \frac{[N]_q !}{[k_1]_q ! \ldots [k_r]_q !} & \text{if } k_1 + \cdots + k_r = N, \\ 0 & \text{otherwise} . \end{cases}
	\end{equation}
	Here, $[k]_q ! = \prod_{i=1}^k \frac{1-q^i}{1-q}$ denotes the $q$-factorial.
	Note that the $q$-multinomial coefficient is a polynomial in $q$ and
	\[
		\left. \qmultinom{N}{{\bf k}}_q \right\vert_{q=1}
		= \binom{N}{{\bf k}} = \frac{N!}{k_1 ! \cdots k_r !}.
	\]
	For $N \in \mathbf{N}$, the generalized $N$-th Rogers--Szeg\H{o} polynomial $H^{(r)}_N({\bf z}, q) \in \mathbf{C}[z_1, \ldots , z_r, q]$ is the generating function of the $q$-multinomial coefficients:
	\[
		H^{(r)}_N({\bf z} , q)
		= \sum_{{\bf k} \in \mathbf{N}^r}\qmultinom{N}{{\bf k}}_q {\bf z^k} .
	\]
	Here we use multi-exponent notation ${\bf z^k} = z_1^{k_1} \cdots z_r^{k_r}$ for  ${\bf k} = (k_1 , \ldots , k_r) \in \mathbf{N}^r$. Note that by our definition of the $q$-multinomial coefficients it is convenient to suppress the condition $k_1 + \cdots + k_r = N$ in the summation index.
	
	As described in \cite{v10}, the $q$-multinomial coefficient $\qbinom{N}{\mathbf{k}}_q$ counts the number of flags $0 = V_0 \subseteq \cdots \subseteq V_r = \mathbf{F}_q^N$ subject to the conditions $\dim(V_i) = k_1 + \cdots + k_i$, and consequently the specialization of the generalized Rogers--Szeg\H{o} polynomial $H^{(r)}_N ({\bf z},q)$ at ${\bf z} = {\bf 1} = (1, \ldots, 1)$ counts the total number of flags of subspaces of length $r$ in $\mathbf{F}_q^N$. This number (a polynomial in $q$) is called a generalized Galois number and denoted by $G_N^{(r)}(q)$. In particular, when $r=2$, the specializations of the Rogers--Szeg\H{o} polynomials are the Galois numbers $G_N(q)$ which count the number of subspaces in $\mathbf{F}_q^N$ \cite{MR0252232}.
	
	We will need notation from the context of symmetric groups. $D(\pi)$ is the descent set of $\pi$, $\mathcal{D}_T$ the descent class, $\des(\pi)$ the number of descents, and $\inv(\pi)$ the number of inversions of $\pi$, i.e.,
		\begin{align*}
			D(\pi) & = \{ i : \pi(i) > \pi(i+1) \} \\
			\mathcal{D}_T & = \{ \pi : D(\pi) = T \} \\
			\des(\pi) & = \lvert D(\pi) \rvert \\
			\inv(\pi) & = \lvert \{ (i,j) : i<j \text{ and } \pi(i) > \pi(j) \} \rvert.
		\end{align*}
	The sign $\sim$ refers to asymptotic equivalence, that is for $f,g : \mathbf{N} \rightarrow \mathbf{R}_{>0}$ we write $f(n) \sim g(n)$ if $\lim_{n \rightarrow \infty} f(n)/g(n) =1$. We write $f(n) = \LandauO(g(n))$ if there exists a constant $C>0$ such that $f(n)\leq Cg(n)$ for all sufficiently large $n$, and $f(n) = \Landauo(g(n))$ if $\lim_{n \rightarrow \infty} f(n)/g(n) =0$.
	
	Let us recollect some known results about statistics of $q$-multinomial coefficients. Note that one has the usual differentiation method.
	\begin{prp}
	\label{taylor and moments}
		Let $X$ be a discrete random variable with probability generating function $E[q^X] = f(q) \in \mathbf{R}[q]$.
		Then,
		\begin{align*}
			\E{}{X} & =\left. \frac{d}{dq} f(q) \right|_{q=1} , \\
			\Var(X) & = \left. \frac{\partial^2}{\partial q^2} q^{-\E{}{X}}f(q) \right|_{q=1} .
		\end{align*}
	\end{prp}
	
	 Via \autoref{taylor and moments} one can prove from the definition \eqref{q-multinomial} of the $q$-multinomial coefficient:
	
	\begin{prp}[\protect{\cite[Equation (1.9) and (1.10)]{cjz11}}]
	\label{moments q-multinomials}
		For ${\bf k} = (k_1 , \ldots , k_r) \in \mathbf{N}^r$ let $X_{N,{\bf k}}$ be a random variable with probability generating function $\E{}{q^{X_{N,{\bf k}}}} = \binom{N}{{\bf k}}^{-1} \cdot \qbinom{N}{{\bf k}}_q$.
		Then,
		\begin{align*}
			\E{}{X_{N,{\bf k}}} & = \frac{e_2 ({\bf k})}{2} , \\ 
			\Var(X_{N,{\bf k}}) & = \frac{(e_1({\bf k}) + 1)e_2({\bf k}) - e_3({\bf k})}{12} . 
		\end{align*}
		Here, $e_i({\bf k})$ denotes the $i$-th elementary symmetric function in the variables ${\bf k} = (k_1 , \ldots , k_r)$.
	\end{prp}
		
	\section{Asymptotic normality of Galois numbers}
	\label{sec:gauss}
	
	Let us start by computing mean and variance of the generalized Galois numbers.
	
	\begin{lem}
	\label{mean variance galois}
		Let $G_{N,r}$ be a random variable with probability generating function $\E{}{q^{G_{N,r}}} = r^{-N} \cdot G_N^{(r)} (q)$.
		Then,
			\begin{align*}
				\E{}{G_{N,r}} & = \frac{r-1}{4r} N(N-1) , \\
				\Var(G_{N,r}) & = \frac{(r-1)(r+1)}{72 r^2} N(N-1)(2N+5) .
			\end{align*}
	\end{lem}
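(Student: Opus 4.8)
The plan is to realize $G_{N,r}$ as a probabilistic mixture indexed by a multinomial random vector, and then to combine the laws of total expectation and total variance with the conditional moments recorded in \autoref{moments q-multinomials}. Since $G_N^{(r)}(q)=\sum_{\mathbf{k}\in\mathbf{N}^r}\qbinom{N}{\mathbf{k}}_q$ and $r^N=\sum_{\mathbf{k}\in\mathbf{N}^r}\binom{N}{\mathbf{k}}$ by the multinomial theorem, the probability generating function of $G_{N,r}$ factors as
\[
  r^{-N}G_N^{(r)}(q)\;=\;\sum_{\mathbf{k}\in\mathbf{N}^r}\frac{\binom{N}{\mathbf{k}}}{r^N}\cdot\frac{\qbinom{N}{\mathbf{k}}_q}{\binom{N}{\mathbf{k}}}\;=\;\sum_{\mathbf{k}\in\mathbf{N}^r}P(\mathbf{K}=\mathbf{k})\,\E{}{q^{X_{N,\mathbf{k}}}},
\]
where $\mathbf{K}=(K_1,\dots,K_r)$ is multinomially distributed with parameters $N$ and $(1/r,\dots,1/r)$ and $X_{N,\mathbf{k}}$ is the random variable of \autoref{moments q-multinomials}. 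Thus $G_{N,r}$ has the law of $X_{N,\mathbf{K}}$: sample $\mathbf{K}$, then sample $X_{N,\mathbf{K}}$ conditionally.

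Applying the laws of total expectation and total variance, using \autoref{moments q-multinomials}, and noting that $e_1(\mathbf{K})=K_1+\dots+K_r=N$ is deterministic, one gets
\[
  \E{}{G_{N,r}}=\tfrac12\,\E{}{e_2(\mathbf{K})},\qquad \Var(G_{N,r})=\tfrac1{12}\,\E{}{(N+1)e_2(\mathbf{K})-e_3(\mathbf{K})}+\tfrac14\,\Var\bigl(e_2(\mathbf{K})\bigr).
\]
So everything reduces to the quantities $\E{}{e_2(\mathbf{K})}$, $\E{}{e_3(\mathbf{K})}$ and $\Var(e_2(\mathbf{K}))$ for a uniform multinomial vector. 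I would pass from elementary symmetric functions to power sums by Newton's identities, $e_2=\tfrac12(p_1^2-p_2)$ and $e_3=\tfrac16(p_1^3-3p_1p_2+2p_3)$ with $p_1=N$; this reduces everything to moments of the binomial marginal $K_1\sim\mathrm{Binomial}(N,1/r)$ together with the joint moments $\E{}{K_1^aK_2^b}$, and the latter follow from the multinomial factorial moments $\E{}{(K_1)_a(K_2)_b}=N(N-1)\cdots(N-a-b+1)\,r^{-a-b}$ after expressing ordinary powers through falling factorials with Stirling numbers of the second kind. Already here, $\E{}{e_2(\mathbf{K})}=\tfrac12\bigl(N^2-r\,\E{}{K_1^2}\bigr)$ together with $\E{}{K_1^2}=N(N-1)r^{-2}+Nr^{-1}$ yields $\E{}{G_{N,r}}=\tfrac{r-1}{4r}N(N-1)$ after a short simplification.

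The variance is the same scheme carried one order further: I would compute $\E{}{K_1^2}$ and $\E{}{K_1^3}$ to get $\E{}{e_3(\mathbf{K})}$, and $\E{}{K_1^4}$ and $\E{}{K_1^2K_2^2}$ to get $\Var\bigl(p_2(\mathbf{K})\bigr)=r\,\Var(K_1^2)+r(r-1)\,\Cov(K_1^2,K_2^2)$, hence $\Var\bigl(e_2(\mathbf{K})\bigr)=\tfrac14\Var\bigl(p_2(\mathbf{K})\bigr)$, and then add the two contributions. I expect the only real obstacle to be bookkeeping: one ends with a cubic polynomial in $N$ whose coefficients are rational functions of $r$, and must verify that it collapses to $\tfrac{(r-1)(r+1)}{72r^2}N(N-1)(2N+5)$. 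Useful checks along the way are that both formulas vanish at $N=0$ and $N=1$; that as $r\to\infty$ the limits $\tfrac14 N(N-1)$ and $\tfrac1{72}N(N-1)(2N+5)$ are exactly the mean and variance of the inversion statistic on $\mathfrak{S}_N$, consistent with the identification in \autoref{macmahon}; and that $r=2$ recovers the known first two moments of the classical Galois numbers.
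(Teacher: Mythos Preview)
Your proposal is correct. The underlying computation is the same as the paper's: both routes use \autoref{moments q-multinomials} for the conditional moments and then reduce to evaluating $\sum_{\mathbf k}\binom{N}{\mathbf k}e_s(\mathbf k)$ and $\sum_{\mathbf k}\binom{N}{\mathbf k}e_2(\mathbf k)^2$ (equivalently, $\E{}{e_s(\mathbf K)}$ and $\E{}{e_2(\mathbf K)^2}$ for the uniform multinomial $\mathbf K$). The paper states the closed form $\sum_{\mathbf k}\binom{N}{\mathbf k}e_s(\mathbf k)=s!\binom{N}{s}\binom{r}{s}r^{N-s}$ directly and handles $e_2^2$ via the Newton-type identity $e_2^2=\tfrac12(p_4-e_1^4+4e_2e_1^2-4e_3e_1+4e_4)$, which is exactly the power-sum reduction you propose. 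The one organizational difference is that the paper applies the second-derivative formula of \autoref{taylor and moments} term by term and then collects the resulting cross terms involving $\E{}{G_{N,r}}$, whereas your mixture formulation with the law of total variance packages those cross terms automatically into $\tfrac14\Var(e_2(\mathbf K))$; this is a cleaner decomposition but leads to the same algebra.
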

	
	\begin{proof}
		By \autoref{taylor and moments} we have to compute the value of the derivatives $\frac{\partial}{\partial q}$ and $\frac{\partial^2}{\partial q^2}$ of $G_N^{(r)}(q) = H^{(r)}_N({\bf 1},q)$ evaluated at $q=1$. Since $H^{(r)}_N({\bf 1},q) = \sum_{{\bf k} \in \mathbf{N}^r}\qmultinom{N}{{\bf k}}_q$, they can be computed from \autoref{moments q-multinomials} via index manipulations in sums involving multinomial coefficients. We will need the identities
		\begin{equation}
			\label{multimomial sum elementary symmetric}
			\sum_{{\bf k} \in \mathbf{N}^r} \binom{N}{{\bf k}} e_s ({\bf k}) = s! \binom{N}{s}\binom{r}{s} r^{N-s} ,
		\end{equation}
		and
		\begin{equation}
		\label{multinomial sum elementary symmetric square}
			\sum_{{\bf k} \in \mathbf{N}^r} \binom{N}{{\bf k}} e_2 ({\bf k})^2 = r^N \frac{N^2 (r-1)^2 - N(r-1)^2+2(r-1)}{4 r^2} N(N-1) .
		\end{equation}
		The last identity follows from 
		\[
			 e_2 ^2 = \frac 12 (p_4 - e_1^4 +4 e_2 e_1^2 -4 e_3 e_1 + 4 e_4)
		\]
		where $p_s$ denotes the $s$-th power sum and
		\begin{align*}
			 \sum_{{\bf k} \in \mathbf{N}^r} \binom{N}{{\bf k}} p_4 ({\bf k}) & = N r^N + 14 \binom{N}{2} r^{N-1} + 36 \binom{N}{3} r^{N-2} + 24 \binom{N}{4} r^{N-3}
			 .
		\end{align*}
		Now, $G_N^{(r)}(1) = r^N$ and by \eqref{multimomial sum elementary symmetric}
		\begin{align*}
			\E{}{G_{N,r}} & = \frac{1}{r^N} \frac{\partial}{\partial q} \bigg|_{q=1} H^{(r)}_N({\bf 1}, q) \\
				& = \frac{1}{r^N}  \sum_{{\bf k} \in \mathbf{N}^r}\binom{N}{{\bf k}} \frac{e_2 ({\bf k})}{2} \\
				& =  \frac{r-1}{4r} N(N-1)
				.
		\end{align*}
		For the variance we will also need \eqref{multinomial sum elementary symmetric square}. That is,
		\begin{align*}
			\Var(G_{N,r}) & = \frac{1}{r^N} \frac{\partial^2}{(\partial q) ^2}\bigg|_{q=1} q^{-\E{}{G_{N,r}}} H^{(r)}_N({\bf 1}, q) \\
			& = \frac{1}{r^N} \sum_{{\bf k} \in \mathbf{N}^r} \frac{\partial^2}{(\partial q) ^2}\bigg|_{q=1} q^{-\E{}{G_{N,r}}} \qmultinom{N}{{\bf k}}_q \\
			& = \frac{1}{r^N} \sum_{{\bf k} \in \mathbf{N}^r} \E{}{G_{N,r}}(\E{}{G_{N,r}} + 1) \qmultinom{N}{{\bf k}}_{q=1}  \\
					& \quad \quad \quad - 2 \E{}{G_{N,r}}  \frac{\partial}{\partial q}\bigg|_{q=1}\qmultinom{N}{{\bf k}}_q + \frac{\partial^2}{(\partial q)^2}\bigg|_{q=1}\qmultinom{N}{{\bf k}}_q \\
			& = \frac{1}{r^N} \sum_{{\bf k} \in \mathbf{N}^r}	(\E{}{G_{N,r}}^2 + \E{}{G_{N,r}}) \binom{N}{{\bf k}} - 2 \E{}{G_{N,r}} \binom{N}{{\bf k}} \frac{e_2({\bf k})}{2} \\
					& \quad \quad + \binom{N}{{\bf k}} \left( \frac{(e_1({\bf k}) + 1)e_2({\bf k}) - e_3({\bf k})}{12} + \frac{e_2({\bf k})}{2} \left( \frac{e_2({\bf k})}{2} -1 \right) \right) \\
			& = \frac{1}{r^N} \sum_{{\bf k} \in \mathbf{N}^r} \binom{N}{{\bf k}} \bigg( \big( \E{}{G_{N,r}}^2 + \E{}{G_{N,r}} \big)  \\
						& \quad \quad + \big( \frac 16 (N+1) - 2 \E{}{G_{N,r}} - 1 \big) \frac{e_2({\bf k})}{2} - \frac{e_3({\bf k})}{12} + \frac{e_2({\bf k})^2 }{4} \bigg) \\
			& = \E{}{G_{N,r}}^2 + \E{}{G_{N,r}} + \big( \frac 16 (N+1) - 2 \E{}{G_{N,r}} - 1 \big)  \E{}{G_{N,r}} \\
						& \quad \quad - \frac{6}{12} \binom{r}{3} \binom{N}{3} \frac{1}{r^3} + \frac{1}{4r^N} \sum_{{\bf k} \in \mathbf{N}^r} \binom{N}{{\bf k}} e_2({\bf k})^2 \\
						& = \frac{(r-1)(r+1)}{72 r^2} N(N-1)(2N+5) .
					\qedhere
		\end{align*}
	\end{proof}
		
		In order to prove asymptotic normality, we make use of the well-known method of moments \cite[Theorem 4.5.5]{MR1796326}.
		We will need some preparatory statements. First, from the description through elementary symmetric polynomials in \autoref{moments q-multinomials} we can derive the asymptotic behavior of the first two central moments of the central $q$-multinomial coefficients and their square-root distant neighbors.
	
	\begin{prp}
	\label{asymptotic moments central q-multinomials}
		Let ${\bf k}_\central = (k_1^{(N)} , \ldots , k_r^{(N)}) \in \mathbf{N}^r$ be a sequence such that $k_1^{(N)} + \cdots + k_r^{(N)} = N$ and $\bigl\lfloor \frac{N}{r} \bigr\rfloor \leq k_i^{(N)}  \leq \bigl\lceil \frac{N}{r} \bigr\rceil$.
		Let $X_{N,{\bf k}_\central}$ be a random variable with probability generating function $\E{}{q^{X_{N,{\bf k}_\central}}} = \binom{N}{{\bf k}_\central}^{-1} \cdot \qbinom{N}{{\bf k}_\central}_q$.
		Then, as $r$ is fixed and $N \rightarrow \infty$,
		\begin{align}
			\label{asymptotic first moment central string}
			\E{}{X_{N,{\bf k}_\central}}	& \sim \frac{r-1}{4r} N^2 , \\
			\label{asymptotic second moment central string}
			\Var(X_{N,{\bf k}_\central}) & \sim \frac{r^2 -1}{36 r^2} N^3 .
		\end{align}
		Furthermore, for ${\bf k}_\central$ as above and any sequence ${\bf s} = (s_1^{(N)} , \ldots , s_r^{(N)}) \in \mathbf{N}^r$ such that $s^{(N)}_1 + \cdots + s^{(N)}_r = N$ and $\lVert {\bf s} - {\bf k_\central} \rVert = \LandauO(\sqrt{N})$ we have, as $r$ is fixed and $N \rightarrow \infty$,
		\begin{align}
			\label{first moment inner tube is central}
			\E{}{X_{N,{\bf s}}} & \sim \E{}{X_{N,{\bf k}_\central}} , \\
			\label{second moment inner tube is central}
			\Var(X_{N,{\bf s}}) & \sim \Var(X_{N,{\bf k}_\central}) .
		\end{align}	
	\end{prp}
	
	\begin{proof}
		The asymptotic equivalences \eqref{asymptotic first moment central string} and \eqref{asymptotic second moment central string} can be computed from the definition of the elementary symmetric polynomials (recall \autoref{moments q-multinomials}). Recall that we are dealing with sequences of ${\bf k}_\central$'s and ${\bf s}$'s, and that all asymptotics refer to fixed $r$ and $N \rightarrow \infty$. We will treat the first moment for illustration purposes:
		\begin{align*}
			\E{}{X_{N,{\bf k}_\central}}	 & = \frac{e_2({\bf k}_\central)}{2} = \frac{\sum_{i<j} k_i^{(N)} k_j^{(N)}}{2} \sim \frac{\sum_{i<j} \frac Nr \frac Nr}{2} = \frac{\binom{r}{2} \frac{N^2}{r^2}}{2} = \frac{r-1}{4r} N^2 .
		\end{align*}
		Furthermore, for the ${\bf s}$'s in question one has
		\begin{align*}
			e_1({\bf s}) & = e_1({\bf k}_\central) + \LandauO(N^{\frac 12}) , \\
			e_2 ({\bf s}) & = e_2 ({\bf k}_\central) + \LandauO(N^{\frac 32}) , \\
			e_3 ({\bf s}) & = e_3 ({\bf k}_\central) + \LandauO(N^{\frac 52}) .
		\end{align*}
		Therefore, the claimed asymptotic equivalences \eqref{first moment inner tube is central} and \eqref{second moment inner tube is central} follow immediately from \autoref{moments q-multinomials} and the exhibited quadratic and cubic asymptotic growth (in $N$) of $\E{}{X_{N,{\bf k}_\central}}$ and $\Var(X_{N,{\bf k}_\central})$, respectively.
	\end{proof}

	By a method of Panny \cite{MR845446}, we can determine the cumulants of $q$-multinomial coefficients explicitly.
	The same technique has already been applied by Prodinger \cite{MR2019639} to obtain the cumulants of $q$-binomial coefficients.
	The exact formula will be stated as \eqref{exact cumulants of gaussian multinomials}, but in the sequel we will only need the following asymptotic statement:
	
	\begin{lem}
		\label{multinomials cumulant bounds}
		Let $\mathbf{k} = \mathbf{k}^{(N)} \in \mathbf{N}^r$ be any sequence such that $k_1^{(N)} + \cdots + k_r^{(N)} = N$.
		For each $N \in \mathbf{N}$, let $X_{N,\mathbf k}$ be a random variable with probability generating function
		$
			\E{}{q^{X_{N,{\bf k}}}}
			= \binom{N}{{\bf k}}^{-1} \cdot \qbinom{N}{{\bf k}}_q
		$.
		Then, 	for all $j \geq 1$, the $j$-th cumulant of $X_{N,{\bf k}}$ is of order
		\begin{align}
		\label{higher cumulants any}
			\kappa_j(X_{N,{\bf k}})
			= O(N^{j+1})
		\end{align}
	for $N \rightarrow \infty$.
	Furthermore, if
		$
			\lVert {\bf k} - {\bf k}_\central \rVert
			= \LandauO(\sqrt{N})
		$
		for $\mathbf{k}_\mathrm{c}$ as above, then	
		as $r$ and $\alpha$ are fixed, and $N \rightarrow \infty$,
		\begin{align}
			\label{higher moments inner tube are central}
			\E{}{X_{N,{\bf k}}^{\alpha}}
			= \left(\frac{r-1}{4r} N^2 \right)^\alpha + O(N^{2\alpha-1})
			.
		\end{align}
	\end{lem}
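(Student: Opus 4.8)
The plan is to combine the factorization of the $q$-factorial into $q$-integers with the additivity of cumulants, following the method of Panny \cite{MR845446} (used by Prodinger \cite{MR2019639} in the $q$-binomial case). Writing $[n]_q = 1 + q + \cdots + q^{n-1} = n\,f_n(q)$, where $f_n$ is the probability generating function of the uniform distribution $U_n$ on $\{0,1,\dots,n-1\}$, the definition \eqref{q-multinomial} gives
\begin{equation*}
	\E{}{q^{X_{N,\mathbf{k}}}} = \binom{N}{\mathbf{k}}^{-1}\qbinom{N}{\mathbf{k}}_q = \frac{\prod_{i=1}^{N} f_i(q)}{\prod_{m=1}^{r}\prod_{i=1}^{k_m} f_i(q)} .
\end{equation*}
Passing to logarithms (with $q = e^t$) turns this quotient into a difference, so the cumulants are additive: $\kappa_j(X_{N,\mathbf{k}}) = \sum_{i=1}^{N}\kappa_j(U_i) - \sum_{m=1}^{r}\sum_{i=1}^{k_m}\kappa_j(U_i)$ for $j \geq 1$. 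The cumulants of the discrete uniform distribution are classical: from $\log f_n(e^t) = h(nt) - h(t)$ with $h(t) := \log\frac{e^t-1}{t} = \sum_{j\geq1} h_j\,t^j/j!$ one reads off $\kappa_j(U_n) = h_j(n^j - 1)$, where $h_j$ is a constant depending only on $j$ (in particular $h_1 = \tfrac12$, $h_2 = \tfrac1{12}$, and $h_j = 0$ for odd $j \geq 3$). Substituting this and using $\sum_{m} k_m = N$ to cancel the constant terms yields the exact formula
\begin{equation}
	\label{exact cumulants of gaussian multinomials}
	\kappa_j(X_{N,\mathbf{k}}) = h_j\Bigl(\sum_{i=1}^{N} i^j - \sum_{m=1}^{r}\sum_{i=1}^{k_m} i^j\Bigr) ,
\end{equation}
which for $j \in \{1,2\}$ recovers the mean and variance recorded in \autoref{moments q-multinomials}.

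Estimate \eqref{higher cumulants any} then follows at once: $0 \leq \sum_{i=1}^{N} i^j \leq N^{j+1}$, and since $0 \leq k_m \leq N$ with $\sum_m k_m = N$ also $0 \leq \sum_{m=1}^{r}\sum_{i=1}^{k_m} i^j \leq \sum_m k_m\,N^j = N^{j+1}$; hence both sums lie in $[0,N^{j+1}]$ and \eqref{exact cumulants of gaussian multinomials} gives $\lvert\kappa_j(X_{N,\mathbf{k}})\rvert \leq \lvert h_j\rvert N^{j+1}$, uniformly over all admissible $\mathbf{k}$.

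For \eqref{higher moments inner tube are central} I would express the raw moment through the complete Bell polynomial,
\begin{equation*}
	\E{}{X_{N,\mathbf{k}}^{\alpha}} = \sum \frac{\alpha!}{\prod_j (j!)^{a_j}\,a_j!}\;\prod_j \kappa_j(X_{N,\mathbf{k}})^{a_j} ,
\end{equation*}
the sum running over all tuples $(a_j)_{j\geq1}$ of nonnegative integers with $\sum_j j a_j = \alpha$. By \eqref{higher cumulants any} the summand indexed by $(a_j)$ has order $N^{\sum_j (j+1)a_j} = N^{\alpha + \sum_j a_j}$, and $\sum_j a_j \leq \alpha$ with equality only when $a_1 = \alpha$; so every summand except $\kappa_1^{\alpha}$ is $O(N^{2\alpha-1})$, and it remains to prove $\kappa_1(X_{N,\mathbf{k}})^{\alpha} = \bigl(\tfrac{r-1}{4r}N^2\bigr)^{\alpha} + O(N^{2\alpha-1})$, for which it suffices that $\E{}{X_{N,\mathbf{k}}} = \tfrac{r-1}{4r}N^2 + O(N)$. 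This last estimate I would obtain from $\E{}{X_{N,\mathbf{k}}} = \tfrac12 e_2(\mathbf{k}) = \tfrac14\bigl(N^2 - \sum_m k_m^2\bigr)$ (\autoref{moments q-multinomials}): writing $k_m = k_{\central,m} + \delta_m$ with $\lvert\delta_m\rvert \leq \lVert\mathbf{k} - \mathbf{k}_\central\rVert = O(\sqrt{N})$ and $\sum_m \delta_m = 0$, and recalling $k_{\central,m} = \tfrac{N}{r} + O(1)$, one gets $\sum_m k_m^2 = \sum_m k_{\central,m}^2 + 2\sum_m k_{\central,m}\delta_m + \sum_m \delta_m^2 = \tfrac{N^2}{r} + O(\sqrt{N}) + O(N) = \tfrac{N^2}{r} + O(N)$.

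The routine parts are the power-sum bounds and the Bell-polynomial bookkeeping; the one delicate point is the error exponent in \eqref{higher moments inner tube are central}. A crude expansion would only give $\sum_m k_m^2 = \tfrac{N^2}{r} + O(N^{3/2})$ and hence $O(N^{2\alpha - 1/2})$; the sharpening to $O(N^{2\alpha-1})$ rests on the cancellation $\sum_m \delta_m = 0$ together with $\mathbf{k}_\central$ lying within $O(1)$ of the constant vector $(\tfrac{N}{r},\dots,\tfrac{N}{r})$, which kills the first-order fluctuation of $\sum_m k_m^2$. All bounds are uniform in the sequence $\mathbf{k}$ subject to the stated hypotheses.
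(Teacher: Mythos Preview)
Your proof is correct and follows the same route as the paper: Panny's additivity of cumulants for the factored $q$-factorial (your power-sum expression for $\kappa_j$ is Faulhaber's restatement of the paper's Bernoulli-polynomial formula), followed by the Bell-polynomial moment--cumulant expansion and the order count $N^{\alpha+\sum_j a_j}$. On the last step you are in fact more careful than the paper: the paper invokes only $\kappa_1 \sim \tfrac{r-1}{4r}N^2$ from \autoref{asymptotic moments central q-multinomials}, which by itself yields merely $o(N^{2\alpha})$, whereas your cancellation $\sum_m \delta_m = 0$ (together with $k_{\central,m}=\tfrac{N}{r}+O(1)$) correctly pins down $\kappa_1 = \tfrac{r-1}{4r}N^2 + O(N)$ and hence the stated error $O(N^{2\alpha-1})$.
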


	\begin{proof}
		For $k \geq 1$, let $Y_k$ be a random variable with probability generating function $\E{}{q^{Y_k}} = \frac{[k]_q!}{k!}$.
		Denote the $j$-th cumulant of $Y_k$ by $\kappa_{j,k}$.
		Panny \cite[bottom of p.\ 176]{MR845446} shows that
		\[
			\kappa_{j,k}
			= \begin{cases}
				\frac{k(k-1)}{4} & j = 1, \\
				\frac{B_j}{j} \cdot \left( \frac{B_{j+1}(k+1)}{j+1} - k \right) & j \geq 2,
			\end{cases}
		\]
		where $B_j(x)$ denotes the $j$-th Bernoulli polynomial evaluated at $x$ and $B_j$ denotes the $j$-th Bernoulli number.
		Note that $B_j = 0$ for odd $j \geq 3$.
		
	  Our random variable $X_{N,\mathbf k}$ has probability generating function
	  \[
		  	\E{}{q^{X_{N,\mathbf k}}}
			= \binom{N}{{\bf k}}^{-1} \cdot \qbinom{N}{{\bf k}}_q
			= \frac{\frac{[N]_q!}{N!}}{\frac{[k_1]_q!}{k_1!} \cdots \frac{[k_r]_q!}{k_r!}}
			.
	  \]
	  Hence, its cumulant generating function is
	  \[
	  	  \log \E{}{e^{tX_{N,\mathbf k}}}
		  = \log \frac{[N]_{e^t}}{N!} - \log \frac{[k_1]_{e^t}}{k_1!} - \cdots - \log \frac{[k_r]_{e^t}}{k_r!}
	  \]
	  and for $j \geq 2$ its cumulants are
	  \begin{equation} \label{exact cumulants of gaussian multinomials}
	  		\begin{aligned}
		  		\kappa_j(X_{N,\mathbf{k}})
				&= \kappa_{j,N} - \kappa_{j,k_1} - \cdots  - \kappa_{j,k_r} \\
		   	&= \frac{B_j}{j(j + 1)} \Bigl( \begin{aligned}[t]
						& B_{j + 1}(N + 1) \\
						& - B_{j + 1}(k_1 + 1) - \cdots - B_{j + 1}(k_r + 1) \Bigr)
						.
					\end{aligned}
			\end{aligned}
		\end{equation}
	  Since $B_j(x) \sim x^j$ for $x \to \infty$ and each $k_i = O(N)$, and by a similar argument for $j=1$, the first part of the lemma follows.
	  
	  For the second part, suppose $\lVert {\bf k} - {\bf k}_\central \rVert= \LandauO(\sqrt{N})$. Since $k_i^{(N)} = \frac Nr + O(\sqrt N)$ it follows that
	  $
	  		B_j(k_i^{(N)} + 1)
			\sim \bigl( \frac Nr + O(\sqrt N) \bigr)^j
			\sim \frac{1}{r^j} N^j
		$.
	  Hence, if we consider the nonvanishing even cumulants $\kappa_{2\beta}(X_{N,\mathbf{k}})$, we have
	  \begin{align*}
	  		B_{2\beta + 1}(N + 1) - \sum_{i=1}^r B_{2\beta + 1}(k_i^{(N)} + 1)
			&\sim N^{2\beta + 1} - r \frac{1}{r^{2\beta + 1}} N^{2\beta + 1} \\
			&= \underbrace{\left( 1 - \tfrac{1}{r^{2\beta}} \right)}_{\neq 0} N^{2\beta + 1} ,
	  \end{align*}
	  i.e., $\kappa_{2\beta}(X_{N,\mathbf{k}})$ is exactly of order $2\beta + 1$ in $N$.
	 	
		We abbreviate $\kappa_j(X_{N,\mathbf{k}})$ by $\kappa_j$ for convenience.
	 	Recall that the moment generating function is the exponential of the cumulant generating function.
	 	Consequently, one has the following standard relation between higher moments and cumulants:
	 	\begin{equation} \label{moments from cumulants}
	 		\E{}{X_{N,{\bf k}}^\alpha}
			= \sum_{\substack{\pi_1 + 2\pi_2 + \cdots + \alpha \pi_\alpha = \alpha \\ \pi_i \in \{ 0,1,\ldots , \alpha\}}}
			\left( \frac{\kappa_1}{1!} \right)^{\pi_1}
			\cdots \left( \frac{\kappa_\alpha}{\alpha!} \right)^{\pi_\alpha} \frac{\alpha!}{\pi_1!
			\cdots \pi_\alpha!}
			.
	 	\end{equation}
	 	Since the $j$-th Bernoulli polynomial has degree $j$ and the higher cumulants $\kappa_j$ of odd order vanish, we have (cf.\ \cite[\S 3]{MR845446})
	 	\begin{align*}
	 		\left( \frac{\kappa_1}{1!} \right)^{\pi_1} \left( \frac{\kappa_2}{2!} \right)^{\pi_2} \cdots \left( \frac{\kappa_\alpha}{\alpha!} \right)^{\pi_\alpha} &= O(N^{2\pi_1 + 3\pi_2 + \cdots + (\alpha +1)\pi_\alpha}) \\
			&= O(N^{\alpha + \pi_1 + \pi_2 +\cdots + \pi_\alpha}) .
	 	\end{align*}
	 	This leads to the asymptotic expansion
		\begin{equation} \label{asymtotics of higher moments of strings}
	 		\E{}{X_{N,{\bf k}}^{\alpha}} = \kappa_1(X_{N,{\bf k}})^\alpha + O(N^{2\alpha-1})
			.
		\end{equation}
	 	But
	 	$
	 		\kappa_1(X_{N,{\bf k}})= \E{}{X_{N,{\bf k}}}$, and since $|| {\bf k} - {\bf k}_\central || = O(\sqrt{N})
		$,
		our \autoref{asymptotic moments central q-multinomials} yields
	 	\[
			\kappa_1(X_{N,{\bf k}}) \sim \frac{r-1}{4r} N^2
		\]
		as $N \rightarrow \infty$.
		This finishes the proof.
	\end{proof}
	
	For our proof of \autoref{gauss} by the method of moments, we need to show that the moments of the standardized central $q$-multinomial coefficients converge to the moments of the standard normal distribution.
	We will show this more generally for $q$-multinomial coefficients in $O(\sqrt{N})$-distance to the center, since our arguments yield this without extra effort. Similar results have been obtained by Canfield, Janson and Zeilberger \cite{cjz11} (for a history concerning this distribution see their erratum).
	
	\begin{prp} \label{CJZ}
	Let ${\bf k}_\central = (k_1^{(N)} , \ldots , k_r^{(N)}) \in \mathbf{N}^r$ be a sequence such that $k_1^{(N)} + \cdots + k_r^{(N)} = N$ and $\bigl\lfloor \frac{N}{r} \bigr\rfloor \leq k_i^{(N)}  \leq \bigl\lceil \frac{N}{r} \bigr\rceil$.
	Furthermore, consider a sequence ${\bf s} = (s_1^{(N)} , \ldots , s_r^{(N)}) \in \mathbf{N}^r$ such that $s^{(N)}_1 + \cdots + s^{(N)}_r = N$ and $\lVert {\bf s} - {\bf k_\central} \rVert = \LandauO(\sqrt{N})$.
	Let $X_{N,\mathbf s}$ be a random variable with probability generating function
		$
			\E{}{q^{X_{N,{\bf s}}}}
			= \binom{N}{{\bf s}}^{-1} \cdot \qbinom{N}{{\bf s}}_q
		$.
		Then the moments of the random variable
		\[
			\tilde{X}_{N,\mathbf{s}}
			= \frac{X_{N,\mathbf{s}} - \E{}{X_{N,\mathbf{s}}}}{\sqrt{\Var{X_{N,\mathbf{s}}}}}
		\]
		converge to the moments of the standard normal distribution.
		In particular, the distribution of $\tilde{X}_{N,\mathbf{s}}$ converges weakly to the standard normal distribution.
	\end{prp}
	
	\begin{proof}
		Once we have shown the the convergence of the moments, the weak convergence of the distributions follows by the method of moments.
		Hence we will concentrate on showing the convergence of the moments.
		Since the moments of a random variable depend polynomially, in particular continuously, on its cumulants, it is sufficient to show that the cumulants of $\tilde{X}_{N,\mathbf{s}}$ converge to the cumulants of the standard normal distribution, $0, 1, 0, 0, 0, \ldots$
		It follows directly from the definition of cumulants that
		\begin{align*}
			\kappa_j(X + c)
			&= \begin{cases} \kappa_j(X) + c & \text{if } j = 1, \\ \kappa_j(X) & \text{if }j \geq 2, \end{cases}
			\\
			\kappa_j(cX)
			&= c^j \kappa_j(X)
			.
		\end{align*}
		Hence, for $j \geq 3$,
		\[
			\kappa_j(\tilde X_{N,\mathbf s})
			= \frac{\kappa_j(X_{N,\mathbf s})}{\Var(X_{N,\mathbf s})^{j/2}} \\
			= O(N^{j + 1 - 3j/2})
			\to 0
		\]
		by \eqref{asymptotic second moment central string}, \eqref{second moment inner tube is central} and \eqref{higher cumulants any}.
	\end{proof}

	We are ready to prove the advertised asymptotic normality.
	
	\begin{thm}
		\label{gauss}
		For $r\geq2$ and $N \in \mathbf{N}$ let $G_{N,r}$ be a random variable with probability generating function $\E{}{q^{G_{N,r}}} = r^{-N} \cdot G_N^{(r)} (q)$.
		Then,
		\begin{align*}
			\E{}{G_{N,r}} & = \frac{r-1}{4r} N(N-1) , \\ 
			\Var(G_{N,r}) & = \frac{(r-1)(r+1)}{72 r^2} N(N-1)(2N+5) . 
		\end{align*}
		For fixed $r$ and $N \rightarrow \infty$, the distribution of the random variable
		\[
			\frac{G_{N,r} -\E{}{G_{N,r}}}{\Var(G_{N,r})^{\frac 12}}
		\]
		converges weakly to the standard normal distribution.
	\end{thm}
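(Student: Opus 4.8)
The statement of mean and variance is precisely \autoref{mean variance galois}, so all the work is in the weak convergence, and the plan is to establish it by the method of moments, \cite[Theorem 4.5.5]{MR1796326}. The organising observation is that $G_{N,r}$ is a \emph{mixture} of the $q$-multinomial random variables of \autoref{moments q-multinomials}: from $\E{}{q^{G_{N,r}}}=r^{-N}\sum_{\mathbf{k}\in\mathbf{N}^r}\qbinom{N}{\mathbf{k}}_q$ and the multinomial theorem, the numbers $r^{-N}\binom{N}{\mathbf{k}}$ form a probability distribution on the compositions of $N$, and $G_{N,r}$ has the law of $X_{N,\mathbf{K}}$, where $\mathbf{K}=(K_1,\dots,K_r)$ is multinomial on $N$ trials with $r$ equally likely outcomes and, conditionally on $\{\mathbf{K}=\mathbf{k}\}$, $X_{N,\mathbf{K}}$ is the variable $X_{N,\mathbf{k}}$. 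Write $\mu_{N,r}=\E{}{G_{N,r}}$ and $\sigma_{N,r}^2=\Var(G_{N,r})$, so that by \autoref{mean variance galois} $\mu_{N,r}\sim\tfrac{r-1}{4r}N^2$ and $\sigma_{N,r}^2\sim\tfrac{r^2-1}{36r^2}N^3$. It then suffices to show that for each fixed $m\in\mathbf{N}$ the $m$-th moment of $(G_{N,r}-\mu_{N,r})/\sigma_{N,r}$ converges to the $m$-th moment of the standard normal distribution, i.e.\ to $(m-1)!!$ for $m$ even and to $0$ for $m$ odd.

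I would use two preliminary estimates. First, concentration of the mixing variable: with $\mathbf{k}_\central$ as in \autoref{asymptotic moments central q-multinomials}, put $\mathcal{T}_N=\{\mathbf{k}:\sum_ik_i=N,\ \lVert\mathbf{k}-\mathbf{k}_\central\rVert\le\sqrt{N}\log N\}$. Each $K_i$ is binomial $(N,1/r)$, so Hoeffding's inequality gives $\Pr[\mathbf{K}\notin\mathcal{T}_N]=\LandauO(N^{-A})$ for every $A>0$; since $0\le X_{N,\mathbf{k}}\le e_2(\mathbf{k})\le N^2$, the event $\{\mathbf{K}\notin\mathcal{T}_N\}$ contributes only $\LandauO(N^{2m-A})$ to $\E{}{(G_{N,r}-\mu_{N,r})^m}$, which is negligible after dividing by $\sigma_{N,r}^m$, so only $\mathbf{K}\in\mathcal{T}_N$ matters. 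Second, near-center moment asymptotics: for $\mathbf{k}\in\mathcal{T}_N$ all $k_i=\tfrac{N}{r}+\LandauO(\sqrt{N}\log N)$, and a short computation from \autoref{moments q-multinomials} gives $\Var(X_{N,\mathbf{k}})=\tfrac{r^2-1}{36r^2}N^3\,(1+\Landauo(1))$ uniformly on $\mathcal{T}_N$. Combining this with the bound $\kappa_j(X_{N,\mathbf{k}})=\LandauO(N^{j+1})$ of \eqref{higher cumulants any} (valid for every composition) and with the vanishing of the odd cumulants $\kappa_{2\ell+1}(X_{N,\mathbf{k}})$, $\ell\ge1$ — by $B_{2\ell+1}=0$ in \eqref{exact cumulants of gaussian multinomials}, equivalently because $\qbinom{N}{\mathbf{k}}_q$ is palindromic — the cumulant--moment relation \eqref{moments from cumulants} applied to the centred variable $X_{N,\mathbf{k}}-\E{}{X_{N,\mathbf{k}}}$ receives contributions only from set-partitions into even blocks; hence, uniformly on $\mathcal{T}_N$, $\E{}{(X_{N,\mathbf{k}}-\E{}{X_{N,\mathbf{k}}})^{2\ell}}=(2\ell-1)!!\,\bigl(\tfrac{r^2-1}{36r^2}N^3\bigr)^{\ell}(1+\Landauo(1))$ and $\E{}{(X_{N,\mathbf{k}}-\E{}{X_{N,\mathbf{k}}})^{2\ell+1}}=0$.

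The remaining point is that the mixing contributes only at a lower order. Set $M_{\mathbf{K}}=\E{}{X_{N,\mathbf{K}}\mid\mathbf{K}}-\mu_{N,r}=\tfrac12 e_2(\mathbf{K})-\mu_{N,r}$; using $e_2(\mathbf{K})=\tfrac12(N^2-\sum_iK_i^2)$ and $\sum_i(K_i-N/r)=0$ one gets $M_{\mathbf{K}}=-\tfrac14\sum_i\bigl((K_i-\tfrac{N}{r})^2-\Var(K_i)\bigr)$, whence, by Minkowski's inequality together with the classical bound $\E{}{|K_i-\E{}{K_i}|^{2p}}=\LandauO(N^p)$ for binomials, $\E{}{|M_{\mathbf{K}}|^p}=\LandauO(N^p)$ for every fixed $p$. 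Now write $G_{N,r}-\mu_{N,r}=M_{\mathbf{K}}+\bigl(X_{N,\mathbf{K}}-\E{}{X_{N,\mathbf{K}}\mid\mathbf{K}}\bigr)$, raise to the $m$-th power, and condition on $\mathbf{K}$: the odd central moments of the second summand annihilate all terms carrying an odd power of it, while the term with even power $2\ell$ of it has absolute value $\LandauO(N^{3\ell})\cdot\E{}{|M_{\mathbf{K}}|^{m-2\ell}}=\LandauO(N^{m+\ell})$, and when $2\ell=m$ it equals $(m-1)!!\,\bigl(\tfrac{r^2-1}{36r^2}\bigr)^{m/2}N^{3m/2}(1+\Landauo(1))$ (using $\Pr[\mathbf{K}\in\mathcal{T}_N]\to1$). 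Since $m+\ell\le\tfrac{3m}{2}$ with equality only for $2\ell=m$, the diagonal term (present only when $m$ is even) is the only one of order $N^{3m/2}$, matching $\sigma_{N,r}^m\sim\bigl(\tfrac{r^2-1}{36r^2}\bigr)^{m/2}N^{3m/2}$: for $m$ even the $m$-th moment of $(G_{N,r}-\mu_{N,r})/\sigma_{N,r}$ converges to $(m-1)!!$, and for $m$ odd it converges to $0$. These are the moments of the standard normal distribution, so $(G_{N,r}-\mu_{N,r})/\sigma_{N,r}\weakto\mathcal{N}(0,1)$.

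The genuinely delicate step is the one just described: one must be sure that averaging over $\mathbf{K}$ — which only displaces the conditional mean $\tfrac12 e_2(\mathbf{K})$ on the scale $N$ and hence injects variance of order $N^2$ — is swamped by the intrinsic width $\sigma_{N,r}\sim N^{3/2}$ of the individual $X_{N,\mathbf{k}}$, and then organise the moment expansion so that exactly the diagonal term $2\ell=m$ reaches the full order $N^{3m/2}$. Everything else (multinomial concentration, the near-center variance asymptotics drawn from \autoref{moments q-multinomials}, the moment bound for $M_{\mathbf{K}}$, and the cumulant--moment conversion) is routine bookkeeping. One small caveat: \autoref{CJZ} is stated for windows of radius $\LandauO(\sqrt{N})$, whereas the multinomial concentrates on $\mathcal{T}_N$ only once the radius is allowed to grow like $\sqrt{N}\log N$; since $\log N=\Landauo(\sqrt{N})$, the uniform estimates underlying \autoref{multinomials cumulant bounds} still go through on $\mathcal{T}_N$, so nothing is lost by this enlargement.
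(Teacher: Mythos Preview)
Your argument is correct, and it follows a genuinely different route from the paper's own proof. The paper establishes asymptotic normality by showing that the \emph{raw} moments of $G_{N,r}$ are asymptotically equivalent to those of the central $q$-multinomial variable $X_{N,\mathbf{k}_\central}$, via a two-sided sandwich $\E{}{G_{N,r}^\alpha}\gtrsim\E{}{X_{N,\mathbf{k}_\central}^\alpha}$ (using a central ball of radius $\sim\sqrt{N}$ carrying most of the multinomial mass) and $\E{}{G_{N,r}^\alpha}\lesssim\E{}{X_{N,\mathbf{k}_\central}^\alpha}$ (using the extremality \eqref{max of e2 on simplex} of $e_2$ at the barycenter); it then transfers the normality of $\tilde X_{N,\mathbf{k}_\central}$ from \autoref{CJZ}. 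You instead attack the \emph{centred} moments directly through the mixture decomposition $G_{N,r}-\mu_{N,r}=M_{\mathbf K}+\bigl(X_{N,\mathbf K}-\E{}{X_{N,\mathbf K}\mid\mathbf K}\bigr)$, exploiting three facts: the odd conditional central moments of $X_{N,\mathbf k}$ vanish exactly by palindromicity, the even ones satisfy the uniform bound $O(N^{3\ell})$ coming from \eqref{higher cumulants any}, and the ``between'' term $M_{\mathbf K}=-\tfrac14\sum_i\bigl((K_i-\tfrac{N}{r})^2-\Var(K_i)\bigr)$ has $p$-th moment only $O(N^p)$. This cleanly isolates the single diagonal term $2\ell=m$ at order $N^{3m/2}$. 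What your approach buys is directness: you never pass through raw moments (where the standardized limit is hidden behind $O(N^{2\alpha-1})$ cancellations), and you do not need \autoref{CJZ} as a separate ingredient. What the paper's approach buys is modularity: it factors the result as ``$G_{N,r}$ behaves like $X_{N,\mathbf{k}_\central}$'' plus ``$X_{N,\mathbf{k}_\central}$ is asymptotically normal'', making the role of the central $q$-multinomial explicit. Your caveat about enlarging the window to radius $\sqrt{N}\log N$ is well taken and harmless, since all the uniform near-centre estimates (for $e_2,e_3$ and hence for $\kappa_2$) hold with error terms $O(N^{5/2}\log N)=o(N^3)$.
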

	
	\begin{proof}
		We will deploy the method of moments (cf.\ \cite[Theorem 4.5.5]{MR1796326}). All asymptotics refer to $N \rightarrow \infty$. We will show that for any $\alpha$,
		\begin{equation}
		\label{eq:higher-moments-asymptotically-equivalent}
			\E{}{G_{N,r}^\alpha}
			\sim \E{}{X_{N,\mathbf{k}_\mathrm{c}}^\alpha}	
			.
		\end{equation}

		Note that once we have shown \eqref{eq:higher-moments-asymptotically-equivalent}, the theorem follows by the method of moments: We have already shown in \autoref{CJZ} that the moments of the standardized $X_{N,\mathbf{k}_\mathrm{c}}$ converge to the moments of the standard normal distribution, hence the same holds for the standardized $G_{N,r}$ by linearity of the expected value and the binomial theorem.
		
		We use the notation
		\[
			f(N) \lesssim g(N)
			\quad \text{if} \quad
			\limsup_{N \to \infty} \frac {f(N)}{g(N)} \leq 1
			.
		\]
    In order to verify \eqref{eq:higher-moments-asymptotically-equivalent}, we will show 
    	$
			\E{}{G_{N,r}^\alpha}
			\gtrsim \E{}{X_{N,\mathbf{k}_\mathrm{c}}^\alpha}	
		$
		and
    	$
			\E{}{G_{N,r}^\alpha}
			\lesssim \E{}{X_{N,\mathbf{k}_\mathrm{c}}^\alpha}	
		$
    separately.
    
    Let us start with
    	$
			\E{}{G_{N,r}^\alpha}
			\gtrsim \E{}{X_{N,\mathbf{k}_\mathrm{c}}^\alpha}	
		$.
    For this, it is sufficient to prove that for all $\epsilon > 0$ there is an $N_0 \in \mathbf{N}$ such that
		\begin{equation}
		\label{eq:higher-moments-epsilon-formulation}
			\E{}{G_{N,r}^\alpha}
			\geq
			(1 - \epsilon) \E{}{X_{N,{\bf k}_\central}^\alpha}
		\end{equation}
		for all $N \geq N_0$.
		Let $\epsilon > 0$.
		For $N \in \mathbf{N}$, let
		$
			U_N \subset \{ \mathbf{x} \in \mathbf{R}^r : x_1 + \cdots + x_r = N \}
		$
		be the ball around $(\frac Nr, \ldots, \tfrac Nr)$ of minimal radius such that
		$
			\sum_{\mathbf{k} \in U_N} \binom{N}{\mathbf{k}}
			\geq
			\sqrt{1-\epsilon} \cdot r^N
		$.
		By the central limit theorem for ordinary multinomial coefficients, the radii of $U_N$ are proportional to $\sqrt N$ up to an error of order $O(1)$.	
		Choose a sequence $\mathbf{k}_{\min}$ with $\mathbf{k}_{\min}^{(N)} \in U_N$ such that $\E{}{X_{N,\mathbf{k}_{\min}}^\alpha} = \min_{\mathrm{k} \in U_N} \E{}{X_{N,\mathbf{k}}^\alpha}$.
		By \eqref{higher moments inner tube are central}, $\E{}{X_{N,\mathbf{k}_{\min}}^\alpha} \sim \E{}{X_{N,\mathbf{k}_{\central}}^\alpha}$.
		Hence there is an $N_0$ such that
		\[
			\E{}{X_{N,\mathbf{k}_{\min}}^\alpha}
			\geq \sqrt{1-\epsilon} \cdot \E{}{X_{N,\mathbf{k}_{\central}}^\alpha}
		\]
		for all $N \geq N_0$.
		Consequently
		\begin{align*}
			\E{}{G_{N,r}^\alpha}
			&= r^{-N} \sum_{\mathbf{k}} \binom{N}{\mathbf{k}} \E{}{X_{N,\mathbf{k}}^\alpha} \\
			&\geq r^{-N} \sum_{\mathbf{k} \in U_N} \binom{N}{\mathbf{k}} \E{}{X_{N,\mathbf{k}}^\alpha} \\
			&\geq \underbrace{r^{-N} \sum_{\mathbf{k} \in U_N} \binom{N}{\mathbf{k}}}_{\geq \sqrt{1-\epsilon}} \underbrace{\E{}{X_{N,\mathbf{k_{min}}}^\alpha}}_{\geq \sqrt{1-\epsilon} \cdot \E{}{X_{N,\mathbf{k_\mathrm{c}}}^\alpha}} \\
			&\geq (1 - \epsilon) \E{}{X_{N,\mathbf{k_\mathrm{c}}}^\alpha} .
		\end{align*}
		
		We continue by showing
		$
			\E{}{G_{N,r}^\alpha}
			\lesssim \E{}{X_{N,\mathbf{k}_\mathrm{c}}^\alpha}	
		$.
		We claim that for all sequences $\mathbf{k} = \mathbf{k}^{(N)}$,
		\begin{equation} \label{asymptotically centrally bounded}
			\E{}{X_{N,\mathbf{k}}^\alpha}
			\lesssim \E{}{X_{N,\mathbf{k_\mathrm{c}}}^\alpha}
			.
		\end{equation}
		In order to show this, we will treat the summands of the expression \eqref{moments from cumulants} for $\E{}{X_{N,\mathbf{k}}^\alpha}$ individually.
		Let us start with the index $\pi = (\alpha, 0, \ldots, 0)$, i.e., the summand $\kappa_1(X_{N,\mathbf{k}})^\alpha$.
		Note that
		\begin{equation} \label{max of e2 on simplex}
			\max_{\mathbf{x} \in N\Delta_{r-1}} e_2(\mathbf{x})
			= e_2 \left( \tfrac Nr, \ldots, \tfrac Nr \right)
			= \frac{r-1}{2r} N^2
			.
		\end{equation}
		Here $N\Delta_{r-1}$ denotes the dilated $(r-1)$-dimensional standard simplex in $\mathbf{R}^r$.
		Hence,
		\[
			\kappa_1(X_{N,\mathbf{k}})
			= \frac{e_2(\mathbf{k})}{2} \leq \frac{r-1}{4r} N^2
		\]
		for all $N$, so our summand is bounded from above by
		\[
			\kappa_1(X_{N,\mathbf{k}})^\alpha
			\leq \left( \frac{r-1}{4r} N^2 \right)^\alpha
			.
		\]
		Now, consider a summand of \eqref{moments from cumulants} with index $\pi \neq (\alpha, 0, \ldots, 0)$.
		By the same arguments that lead to \eqref{asymtotics of higher moments of strings} such a summand is $O(N^{2\alpha - 1})$.
		Since the number of summands (the number of partitions of $\alpha$) does not depend on $N$, we have
		\begin{align*}
			\E{}{X_{N,\mathbf{k}}^\alpha}
			&\leq \left( \frac{r-1}{4r} N^2 \right)^\alpha + O(N^{2\alpha - 1}) \\
			&\lesssim \left( \frac{r-1}{4r} N^2 \right)^\alpha \\
			&\annrel{\eqref{higher moments inner tube are central}}{\sim} \E{}{X_{N,\mathbf{k}_\mathrm{c}}^\alpha}
			.
		\end{align*}
		This proves \eqref{asymptotically centrally bounded}.
		We are ready to prove the final inequality $\E{}{G_{N,r}^\alpha}
			\lesssim \E{}{X_{N,\mathbf{k}_\mathrm{c}}^\alpha}	$:
		Choose a sequence $\mathbf{k}_{\max} = \mathbf{k}_{\max}^{(N)}$ in $\mathbf{N}^r$ such that
		$
			\E{}{X_{N,\mathbf{k}_{\max}}^\alpha}
			= \max_{\mathbf{k}} \E{}{X_{N,\mathbf{k}}^\alpha}
		$.
		Then
		\begin{align*}
			\E{}{G_{N,r}^\alpha}
			&= \frac{1}{r^N} \sum_\mathbf{k} \binom{N}{\mathbf{k}} \E{}{X_{N,\mathbf{k}}^\alpha} \\
			&\leq \frac{1}{r^N} \sum_\mathbf{k} \binom{N}{\mathbf{k}} \E{}{X_{N,\mathbf{k_{\max}}}^\alpha} \\
			&= \E{}{X_{N,\mathbf{k_{\max}}}^\alpha} \\
			&\annrel{\eqref{asymptotically centrally bounded}}{\lesssim} \E{}{X_{N,\mathbf{k_\mathrm{c}}}^\alpha}
			.
		\end{align*}
		This verifies \eqref{eq:higher-moments-asymptotically-equivalent}, and finishes the proof of the theorem.
	\end{proof}

	\section{Galois numbers and inversion statistics}
	\label{sec:macmahon}
	
		Let $\mathcal{P}_{N,r}$ denote the set of partitions of an arbitrary integer into up to $r$ parts of size up to $N$.
		Let $p_{N,r} = \lvert \mathcal{P}_{N,r} \rvert$ denote the number of such partitions.
		Then
		\begin{equation} \label{Drwnq9F6}
			p_{N,r} = \binom{N+r}{N} .
		\end{equation}
		For $T \subset \{1, \ldots, N\}$ with $t := \lvert T \rvert \leq r$, let
		\[
			\mathcal{P}_{N,r}^T
			=
			\{ \lambda \in \mathcal{P}_{N,r} : \{ \lambda_1, \ldots, \lambda_r \} \supset T \}
		\]
		denote the subset of $\mathcal{P}_{N,r}$ consisting of all the partitions such that each element of $T$ occurs as the size of a part.
		Then removing one part of each size in $T$ defines a bijection $\mathcal{P}_{N,r}^T \to \mathcal{P}_{N,r-t}$.
		Hence
		\begin{equation} \label{fZZMo8yc}
			\lvert \mathcal{P}_{N,r}^T \rvert
			= p_{N,r-t}
			\stackrel{\eqref{Drwnq9F6}}{=} \binom{N+r-t}{N}.
		\end{equation}
		Note that we adhere to a more restrictive definition of binomial coefficients, namely $\binom{a}{b} = 0$ unless $0 \leq b \leq a$.
		 
		\begin{thm}
			\label{macmahon}
			Consider the Galois number $G_N^{(r)}(q) \in \mathbf{N}[q]$ for $r\geq2$ and $N \in \mathbf{N}$. Let $\mathfrak{S}_N$ be the symmetric group on $N$ elements, and for a permutation $\pi \in \mathfrak{S}_N$ denote by $\inv(\pi)$ its number of inversions and by $\des(\pi)$ the cardinality of its descent set $D(\pi)$. Then,
			\[
				G_N^{(r)}(q) = \sum_{\pi \in \mathfrak{S}_N}\binom{N+r-1-\des(\pi)}{N}  q^{\inv(\pi)}
				.
			\]
			For fixed $N$ and $r \rightarrow \infty$ we have
			\[
				\frac{N !}{r^N} \cdot G_N^{(r)}(q) \to \sum_{\pi \in \mathfrak{S}_N} q^{\inv(\pi)}  = [N]_q!
				.
			\]			
		\end{thm}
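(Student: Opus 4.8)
The plan is to first establish the exact formula, then read off the limit. For the exact formula, I would start from the known combinatorial interpretation of the generalized Rogers--Szeg\H{o} polynomial: the $q$-multinomial coefficient $\qbinom{N}{\mathbf{k}}_q$ is the generating function for inversions of multiset permutations, i.e.\ $\qbinom{N}{\mathbf{k}}_q = \sum_{w} q^{\inv(w)}$ summed over rearrangements $w$ of the word $1^{k_1} 2^{k_2} \cdots r^{k_r}$ (this is MacMahon's theorem, cited as \cite{MR1442260}). Hence $G_N^{(r)}(q) = H_N^{(r)}(\mathbf{1}, q) = \sum_{\mathbf{k} \in \mathbf{N}^r} \qbinom{N}{\mathbf{k}}_q$ is the generating function for inversions over all words of length $N$ in the alphabet $\{1, \dots, r\}$. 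The idea is now to group these words by the underlying permutation pattern: every word $w$ of length $N$ with letters in $\{1, \dots, r\}$ factors uniquely as a weakly increasing ``content'' together with a permutation $\pi \in \mathfrak{S}_N$ recording the standardization, and crucially $\inv(w) = \inv(\pi)$. So I would fix $\pi$ and count how many words standardize to $\pi$ with letters in $\{1, \dots, r\}$; this is the same as counting the ways to choose weakly increasing values $1 \le a_1 \le \cdots \le a_N \le r$ that are \emph{strictly} increasing exactly across the descents of $\pi$ (forced to jump at each $i \in D(\pi)$). Reparametrizing $a$ by its consecutive increments turns this into counting partitions into at most $r$ parts of size at most $N$ in which each size in a prescribed $\des(\pi)$-element set $T \subseteq \{1, \dots, N\}$ is used --- which is exactly $|\mathcal{P}_{N,r-1}^{T}|$ in the notation just set up, and by \eqref{fZZMo8yc} equals $\binom{N + r - 1 - \des(\pi)}{N}$. (The shift $r \mapsto r-1$ comes from the $N$ gaps between the $N$ letters giving parts bounded by... more carefully, from $a_1 - 1, a_2 - a_1, \dots, r - a_N$ being $N+1$ nonnegative parts summing to $r - 1$; I would make sure the bookkeeping of which quantity is ``$r$'' vs.\ ``$r-1$'' is done cleanly.) Summing over $\pi$ yields the claimed identity.

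For the asymptotic statement, I would argue as follows. From the exact formula, $\frac{N!}{r^N} G_N^{(r)}(q) = \sum_{\pi \in \mathfrak{S}_N} \frac{N!}{r^N}\binom{N + r - 1 - \des(\pi)}{N} q^{\inv(\pi)}$. For each fixed $\pi$, the quantity $\binom{N + r - 1 - \des(\pi)}{N}$ is a polynomial in $r$ of degree exactly $N$ with leading coefficient $\frac{1}{N!}$ (since $\des(\pi) \le N - 1$ is fixed as $r \to \infty$), so $\frac{N!}{r^N}\binom{N+r-1-\des(\pi)}{N} \to 1$ as $r \to \infty$. Because $\mathfrak{S}_N$ is a finite sum (independent of $r$), I can pass to the limit termwise, obtaining $\sum_{\pi \in \mathfrak{S}_N} q^{\inv(\pi)}$, which is the standard Mahonian generating function $[N]_q!$.

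The main obstacle is the first part: getting the descent/inversion bookkeeping exactly right --- precisely, verifying that standardization of a word preserves the inversion count, that the fiber over $\pi$ is in bijection with the partitions $\mathcal{P}_{N,r-1}^{D(\pi)}$ (not $\mathcal{P}_{N,r}^{D(\pi)}$ or some off-by-one variant), and that the forced strict ascents occur at exactly the descent positions of $\pi$ and nowhere else is allowed to be forced. Once that bijection is pinned down, invoking \eqref{fZZMo8yc} is immediate and the limit is routine polynomial asymptotics. An alternative route for the exact formula --- expanding each $\qbinom{N}{\mathbf{k}}_q$ via the ``descent composition'' refinement of $[N]_q!$ and resumming the multinomial-free sum over $\mathbf{k}$ --- would also work and might make the appearance of $\binom{N+r-1-\des(\pi)}{N}$ more transparent, but I expect the word-standardization argument to be the cleanest to write.
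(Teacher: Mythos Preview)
Your approach is essentially the paper's: both expand $G_N^{(r)}(q) = \sum_{\mathbf{k}} \qbinom{N}{\mathbf{k}}_q$, invoke the descent-set interpretation of the $q$-multinomial (the paper simply cites $\qbinom{N}{\mathbf{k}_\lambda}_q = \sum_{D(\pi) \subseteq \{\lambda_1,\ldots,\lambda_{r-1}\}} q^{\inv(\pi)}$ from \cite{MR1442260} rather than deriving it via word standardization --- in fact your ``alternative route'' is exactly what the paper does), swap the order of summation, and count the relevant compositions via $\lvert \mathcal{P}_{N,r-1}^{D(\pi)}\rvert = \binom{N+r-1-\des(\pi)}{N}$; the limit argument is identical. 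One bookkeeping caveat worth flagging: with left-to-right tie-breaking, the words standardizing to $\pi$ are enumerated by $\binom{N+r-1-\des(\pi^{-1})}{N}$, not $\binom{N+r-1-\des(\pi)}{N}$, but since $\inv(\pi)=\inv(\pi^{-1})$ the substitution $\pi\mapsto\pi^{-1}$ immediately yields the stated formula.
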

		
		\begin{proof}
			$G_N^{(r)}(q) = H^{(r)}_N(\mathbf{1}, q)$, and by definition
			\[
				H^{(r)}_N(\mathbf{1}, q)
				= \sum_{\substack{\mathbf{k} \in \mathbf{N}^r \\ k_1 + \cdots + k_r = N}} \qbinom{N}{\mathbf{k}}_q
				.
			\]
			Note that there is a bijection $\mathcal{P}_{N,r-1} \to \{ \mathbf{k} \in \mathbf{N}^r : k_1 + \cdots + k_r = N \}$ given by
			\[
				\mathbf{k}_\lambda
				:=
				(N - \lambda_1, \lambda_1 - \lambda_2, \ldots, \lambda_{r-2}-\lambda_{r-1}, \lambda_{r-1})
				.
			\]
			Hence
			\[
				H^{(r)}_N(\mathbf{1}, q)
				= \sum_{\lambda \in \mathcal{P}_{N,r-1}} \qbinom{N}{\mathbf{k}_\lambda}_q
				.
			\]
			By \cite[Chapter 2, (20)]{MR1442260}
			\[
				\qbinom{N}{{\mathbf{k}_\lambda}}_q
				= \sum_{\substack{\pi \in \mathfrak{S}_N \\ D(\pi) \subset \{\lambda_1, \ldots, \lambda_{r-1}\}}} q^{\inv(\pi)} .
			\]
			Hence 	
			\begin{align*}
				H^{(r)}_N(\mathbf{1}, q)
				&=
				\sum_{\lambda \in \mathcal{P}_{N,r-1}}
				\sum_{\substack{\pi \in \mathfrak{S}_N \\ D(\pi) \subset \{\lambda_1, \ldots, \lambda_{r-1}\}}}
				q^{\inv(\pi)}
				\\ &=
				\sum_{\lambda \in \mathcal{P}_{N,r-1}}
				\sum_{T \subset \{\lambda_1, \ldots, \lambda_{r-1}\}}
				\sum_{\substack{\pi \in \mathfrak{S}_N \\ D(\pi) = T}}
				q^{\inv(\pi)}
				\\	 &=
				\sum_{T \subset \{1, \ldots, N-1\}}
				\lvert \mathcal{P}_{N,r-1}^T \rvert
				\sum_{\substack{\pi \in \mathfrak{S}_N \\ D(\pi) = T}}
				q^{\inv(\pi)} \\
				& = \sum_{t=0}^{N-1} \binom{N+r-1-t}{N} \sum_{\substack{\pi \in \mathfrak{S}_N \\ \des(\pi) = t}} q^{\inv(\pi)}
.
			\end{align*}
			The last equality follows from \eqref{fZZMo8yc} which implies our first assertion. As for the second claim, note that as $N$ is fixed, $0 \leq t \leq N-1$  and $r \rightarrow \infty$ we have
			\[
				\binom{N+r-1-t}{N} \sim \frac{r^N}{N!}
				.
			\]
			The stated decomposition of the inversion statistic as a $q$-factorial is well-known.
		\end{proof}
		
	\section{Applications and discussions}
	\label{sec:applications}
		
		\subsection{Rogers--Szeg\H{o} polynomials}
			Our \autoref{mean variance galois} is a sufficient ingredient to determine the covariance of the overall distribution of coefficients in the generalized $N$-th Rogers--Szeg\H{o} polynomial.
			
			\begin{cor}
			\label{covariance Rogers--Szegoe}
				Given $N, r > 0$, let $(\mathbf X; Y) = (X_1, \ldots ,X_r;Y)$ be a random vector with probability generating function $\E{}{\mathbf z ^\mathbf X q^Y} = r^{-N}H^{(r)}_N({\bf z},q)$. Then, the covariance of $(\mathbf{X}; Y)$ is given by
				\begin{align*}
					\Sigma(\mathbf X; Y) & =
					\begin{pmatrix}
						\Sigma(\mathbf X) & 0 \\
						0 & \Var (G_{N,r})
					\end{pmatrix}
					,
				\end{align*}
				where $\Sigma(\mathbf X)$ is the covariance of the multinomial distribution.
			\end{cor}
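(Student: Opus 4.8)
The plan is to compute the covariance matrix $\Sigma(\mathbf X;Y)$ entry by entry, exploiting the product structure of the generating function $r^{-N}H^{(r)}_N(\mathbf z,q)$ along the two groups of variables $\mathbf z$ and $q$. First I would record the two marginals. Setting $q=1$ gives $\E{}{\mathbf z^{\mathbf X}} = r^{-N}H^{(r)}_N(\mathbf z,1) = r^{-N}(z_1+\cdots+z_r)^N$, so $\mathbf X$ is exactly multinomially distributed with $N$ trials and uniform cell probabilities $1/r$; hence its covariance block is the stated multinomial covariance $\Sigma(\mathbf X)$, with $\Var(X_i) = N\frac1r(1-\frac1r)$ and $\Cov(X_i,X_j) = -\frac{N}{r^2}$ for $i\neq j$. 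Setting $\mathbf z = \mathbf 1$ gives $\E{}{q^Y} = r^{-N}H^{(r)}_N(\mathbf 1,q) = r^{-N}G^{(r)}_N(q)$, so $Y$ has the distribution of $G_{N,r}$ and the bottom-right entry is $\Var(G_{N,r})$, which is exactly \autoref{mean variance galois}.

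The substantive point is the vanishing of the off-diagonal blocks, i.e. $\Cov(X_i,Y)=0$ for every $i$. The cleanest way I see is to use the symmetry of $H^{(r)}_N(\mathbf z,q)$ under permutations of $z_1,\dots,z_r$: since the $q$-multinomial coefficient $\qmultinom{N}{\mathbf k}_q$ is invariant under permuting the entries of $\mathbf k$, the random vector $(\mathbf X;Y)$ is exchangeable in the $\mathbf X$-coordinates, so $\E{}{X_iY}$ is independent of $i$, and likewise $\E{}{X_i}=\E{}{X_j}$. Summing over $i$ and using $X_1+\cdots+X_r=N$ (which holds with probability one, since $\qmultinom{N}{\mathbf k}_q=0$ unless $\sum k_i=N$) gives $\sum_i \E{}{X_iY} = \E{}{NY} = N\,\E{}{Y}$, hence $\E{}{X_iY} = \frac Nr\E{}{Y}$; similarly $\E{}{X_i}=\frac Nr$. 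Therefore $\Cov(X_i,Y) = \E{}{X_iY}-\E{}{X_i}\E{}{Y} = \frac Nr\E{}{Y} - \frac Nr\E{}{Y} = 0$, which is what the $0$ blocks assert. Assembling the marginal computations with this vanishing yields the displayed block-diagonal form.

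The main obstacle is essentially bookkeeping rather than conceptual: one must make sure the exchangeability argument is applied to the correct mixed moment, and one has to be a little careful that the conditioning event $\sum X_i = N$ has probability one so that the identity $\sum_i X_i Y = N Y$ can be taken inside the expectation. An alternative to the symmetry argument, if one prefers an explicit computation, is to write $\E{}{X_iY} = \left.\partial_{z_i}\partial_q\bigl(r^{-N}H^{(r)}_N(\mathbf z,q)\bigr)\right|_{\mathbf z=\mathbf 1,q=1}$ and expand using $H^{(r)}_N(\mathbf z,q)=\sum_{\mathbf k}\qmultinom{N}{\mathbf k}_q\mathbf z^{\mathbf k}$ together with \autoref{moments q-multinomials} for $\partial_q\qmultinom{N}{\mathbf k}_q|_{q=1}$; but the symmetry route avoids all of that and is the one I would write up.
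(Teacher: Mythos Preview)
Your proposal is correct and follows essentially the same approach as the paper: both identify the marginals by specializing at $q=1$ and $\mathbf z=\mathbf 1$, and both obtain $\Cov(X_i,Y)=0$ from the exchangeability of the $X_i$'s together with the almost-sure constraint $X_1+\cdots+X_r=N$. The paper phrases the last step via bilinearity of covariance, writing $0=\Cov(N,Y)=\sum_i\Cov(X_i,Y)$, whereas you expand into mixed moments first, but the argument is the same.
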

			
			\begin{proof}
				The diagonal (block) entries are clear, since the specialization at $({\bf z},1)$ gives the multinomial distribution whereas the specialization at $({\bf 1},q)$ is exactly the generalized Galois number studied in \autoref{mean variance galois}.
				Therefore, we only have to prove that $\Cov(X_i,Y) = 0$ for $i=1,\ldots,r$, which can be shown as follows:
				By symmetry, it is clear that all $\Cov(X_i,Y)$  coincide.
				As $X_1 + \cdots + X_r = N$ almost surely, it follows that
				\[
					0 = \Cov(N,Y) = N \Cov(X_1, Y) . \qedhere
				\]
			\end{proof}
		
		\subsection{Linear $q$-ary codes}
			Consider the classical Galois numbers $G_n(q) = G_n^{(2)}(q)$ that count the number of subspaces of $\mathbf{F}_q^n$.
			For a general prime power $q$, Hou \cite{MR2177491,MR2492098} studies the number of equivalence classes $N_{n,q}$ of linear $q$-ary codes of length $n$ under three notions of equivalence: permutation equivalence ($\mathfrak{S}$), monomial equivalence ($\mathfrak{M}$), and semi-linear monomial equivalence ($\Gamma$).
		He proves
		\begin{align*}
			N_{n,q}^{\mathfrak{S}} & \sim \frac{G_n(q)}{n!} , \\
			N_{n,q}^{\mathfrak{M}} & \sim \frac{G_n(q)}{n!(q-1)^{n-1}} , \\
			N_{n,q}^{\Gamma} & \sim \frac{G_n(q)}{n!(q-1)^{n-1}a} ,
		\end{align*}
		where $a = \left| \mathrm{Aut}(\mathbf{F}_q) \right| = \log_p(q)$ with $p = \mathrm{char}(\mathbf{F}_q)$.
		The case of binary codes up to monomial equivalence, $N_{n,2}^{\mathfrak{S}} \sim \frac{G_n(2)}{n!}$, is previously derived by Wild \cite{MR1755766, MR2191288}.
		Now, the following corollary is immediate from our \autoref{macmahon}.
		
		\begin{cor}
		\label{binary codes and matroids}
			The number of linear $q$-ary codes of length $n$ up to equivalence $(\mathfrak{S})$, $(\mathfrak{M})$ and $(\Gamma)$ is given asymptotically, as $q$ is fixed and $n \rightarrow \infty$, by
			 \begin{align}
			 	N_{n,q}^{\mathfrak{S}} & \sim \frac{1}{n!} \sum_{\substack{\pi \in \mathfrak{S}_n \\ \des(\pi) \leq 1}}  \binom{n+1-\des(\pi)}{n} q^{\inv(\pi)} , \\
				N_{n,q}^{\mathfrak{M}} & \sim \frac{1}{n!(q-1)^{n-1}} \sum_{\substack{\pi \in \mathfrak{S}_n \\ \des(\pi) \leq 1}}  \binom{n+1-\des(\pi)}{n} q^{\inv(\pi)}, \\
				N_{n,q}^{\Gamma} & \sim \frac{1}{n!(q-1)^{n-1}a} \sum_{\substack{\pi \in \mathfrak{S}_n \\ \des(\pi) \leq 1}}  \binom{n+1-\des(\pi)}{n} q^{\inv(\pi)} ,
			\end{align}
			where $a = \left| \mathrm{Aut}(\mathbf{F}_q) \right| = \log_p(q)$ with $p = \mathrm{char}(\mathbf{F}_q)$.
			In particular, the numerator of the asymptotic numbers of linear $q$-ary codes is the (weighted) inversion statistic on the permutations having at most $1$ descent. 
		\end{cor}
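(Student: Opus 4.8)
The plan is to specialize Theorem \ref{macmahon} to $r = 2$ and feed the result into Hou's (and Wild's) asymptotic formulas. First I would observe that the classical Galois number is $G_n(q) = G_n^{(2)}(q)$, so setting $r = 2$ in the exact formula of Theorem \ref{macmahon} gives
\[
	G_n(q)
	= \sum_{\pi \in \mathfrak{S}_n} \binom{n + 1 - \des(\pi)}{n} q^{\inv(\pi)}
	.
\]
Next I would note that, under our restrictive convention $\binom{a}{b} = 0$ unless $0 \leq b \leq a$, the binomial coefficient $\binom{n + 1 - \des(\pi)}{n}$ vanishes as soon as $n + 1 - \des(\pi) < n$, i.e. as soon as $\des(\pi) \geq 2$; it equals $1$ when $\des(\pi) = 0$ and equals $\binom{n}{n} = 1$ when $\des(\pi) = 1$, wait — more precisely it is $\binom{n+1}{n} = n+1$ when $\des(\pi) = 0$ and $\binom{n}{n} = 1$ when $\des(\pi) = 1$. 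Either way, only permutations with at most one descent contribute, so
\[
	G_n(q)
	= \sum_{\substack{\pi \in \mathfrak{S}_n \\ \des(\pi) \leq 1}} \binom{n + 1 - \des(\pi)}{n} q^{\inv(\pi)}
	.
\]

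The rest is a direct substitution. Hou \cite{MR2177491,MR2492098} proves $N_{n,q}^{\mathfrak{S}} \sim G_n(q)/n!$, $N_{n,q}^{\mathfrak{M}} \sim G_n(q)/(n!(q-1)^{n-1})$ and $N_{n,q}^{\Gamma} \sim G_n(q)/(n!(q-1)^{n-1}a)$ as $n \to \infty$ with $q$ fixed (the binary case of the first being due to Wild \cite{MR1755766,MR2191288}); replacing $G_n(q)$ by the sum over permutations with at most one descent in each of these three asymptotic equivalences yields exactly the three displayed formulas, and the final sentence about the numerator being the weighted inversion statistic on $\{\pi : \des(\pi) \leq 1\}$ is then just a restatement.

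There is essentially no obstacle here: the corollary is immediate once Theorem \ref{macmahon} is in hand. The only point requiring a moment's care is the bookkeeping of the binomial-coefficient convention — checking that $\binom{n+1-\des(\pi)}{n}$ is genuinely zero for $\des(\pi) \geq 2$ and nonzero (indeed positive) for $\des(\pi) \in \{0,1\}$ — so that the sum over all of $\mathfrak{S}_n$ collapses to the sum over the descent classes $\mathcal{D}_\emptyset$ and $\bigcup_{|T|=1}\mathcal{D}_T$. After that, one simply transports the three equivalences through the substitution.
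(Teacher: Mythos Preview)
Your proposal is correct and matches the paper's own treatment: the paper simply declares the corollary ``immediate from our \autoref{macmahon}'' after recording Hou's asymptotics, and your argument spells out exactly that immediate deduction---specialize $r=2$, invoke the binomial convention to truncate to $\des(\pi)\leq 1$, and substitute into the three asymptotic formulas.
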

		
		\subsection{The symmetric group acting on subspaces over finite fields}

		Consider the character $\chi_N(\tau) = \# \{ V \subset \mathbf{F}_q^N : \tau V = V \}$ of the symmetric group $\mathfrak{S}_N$ acting on $\mathbf{F}_q^N$ by permutation of coordinates.
		Lax \cite{MR2067601} shows that the normalized character $\chi_N(\tau)/G_N(q)$ asymptotically approaches the character which takes the value $1$ on the identity and $0$ otherwise.
		
		\begin{cor}
			Consider the character $\chi_N (\tau)$ of the symmetric group $\mathfrak{S}_N$ acting on $\mathbf{F}_q^N$ by permutation of coordinates. Then, as $N \rightarrow \infty$, the normalized character
			\begin{align}
				\frac{\chi_N (\tau)}{\sum_{\substack{\pi \in \mathfrak{S}_N \\ \des(\pi) \leq 1}}  \binom{N+1-\des(\pi)}{N} q^{\inv(\pi)}} \rightarrow \delta_{\mathrm{id},\tau} .
			\end{align}
			In particular, the character $\chi_N$ approaches asymptotically the (weighted) inversion statistic on the permutations having at most $1$ descent.
		\end{cor}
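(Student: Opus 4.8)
The plan is to obtain the corollary as an immediate consequence of Lax's theorem combined with the $r=2$ instance of \autoref{macmahon}. First I would recall that $\chi_N(\mathrm{id})$ counts \emph{all} subspaces of $\mathbf{F}_q^N$, so $\chi_N(\mathrm{id}) = G_N(q) = G_N^{(2)}(q)$; thus the normalization $\chi_N(\tau)/G_N(q)$ in Lax's statement $\chi_N(\tau)/G_N(q) \to \delta_{\mathrm{id},\tau}$ is precisely the quantity we wish to rewrite.

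The key step is to apply \autoref{macmahon} with $r = 2$, which gives
\[
	G_N(q) = G_N^{(2)}(q) = \sum_{\pi \in \mathfrak{S}_N} \binom{N+1-\des(\pi)}{N} q^{\inv(\pi)} .
\]
Here I would invoke the restrictive convention $\binom{a}{b} = 0$ unless $0 \leq b \leq a$, fixed in the excerpt just before \autoref{macmahon}: the factor $\binom{N+1-\des(\pi)}{N}$ is nonzero if and only if $N \leq N+1-\des(\pi)$, i.e.\ if and only if $\des(\pi) \leq 1$. Consequently every permutation with two or more descents contributes $0$, and the summation range may be narrowed to $\{\pi : \des(\pi) \leq 1\}$ without changing the value. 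This identifies the denominator appearing in the statement with $G_N(q)$, and substituting the identity into Lax's limit yields the displayed convergence; the closing sentence about $\chi_N$ approaching the weighted inversion statistic on permutations with at most one descent is then just a verbal restatement of the same fact.

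I do not expect a genuine obstacle here: the argument is essentially bookkeeping. The only point warranting a moment's care is the vanishing of the high-descent binomial coefficients, which is immediate once the restrictive binomial convention is in force. As a sanity check one may also observe that for $\tau = \mathrm{id}$ the ratio is identically $1$ for every $N$ — since $\chi_N(\mathrm{id}) = G_N(q)$ — so the substantive content of the corollary lies entirely in the case $\tau \neq \mathrm{id}$, where it coincides verbatim with Lax's theorem after the denominator has been rewritten.
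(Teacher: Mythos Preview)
Your proposal is correct and matches the paper's intended argument: the paper states this corollary without proof immediately after recalling Lax's result $\chi_N(\tau)/G_N(q) \to \delta_{\mathrm{id},\tau}$, so the only content is rewriting the denominator $G_N(q) = G_N^{(2)}(q)$ via the $r=2$ case of \autoref{macmahon} and observing that the restrictive binomial convention kills all terms with $\des(\pi) \geq 2$. Your write-up makes this explicit and is exactly what the paper leaves to the reader.
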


		\subsection{Affine Demazure modules}
			
			We refer the reader to \cite{MR2188930,MR2235341,MR1104219} for the basic facts about the representation theory of affine Kac-Moody algebras and Demazure modules, and the notation used.
			Now, according to \cite[Equation (3.4)]{MR1457389} and \cite[Theorem 6 and 7]{MR1771615}, certain Demazure characters can be described via generalized Rogers--Szeg\H{o} polynomials.
		
		\begin{lem}
		\label{demazure character as Rogers--Szegoe}
			Let $r,N \in \mathbf{N}$, $r \geq 2$ and $0 \leq i < r$, $i \equiv N \mod r$.
			Let $q = e^{\delta}$, ${\bf z} = (e^{\Lambda_1 - \Lambda_0}, e^{\Lambda_2 -\Lambda_1}, \ldots , e^{\Lambda_{r-1} - \Lambda_{r-2}}, e^{\Lambda_0 -\Lambda_{r-1}})$, and
			\begin{align*}
				d_r(N) & = \frac{N(N-1)}{2} - \frac{(N-i)(N+i-r)}{2r} .
			\end{align*}
			Then, the character of the $\widehat{\mathfrak{sl}}_r$ Demazure module $V_{-N \omega_1}(\Lambda_0)$ is given by
			\[
				\charac{V_{-N \omega_1}(\Lambda_0)} = e^{\Lambda_0 - d_r(N)\delta} \cdot H_N^{(r)}({\bf z} , q) \quad \in \mathbf{N}[{\bf z} , q^{-1}]
				.
			\]
		\end{lem}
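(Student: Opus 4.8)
The statement is essentially a translation of two known results into the language of this paper, together with an explicit identification of the normalizing monomial $e^{\Lambda_0 - d_r(N)\delta}$. I would proceed as follows. First, recall the setup: fix the affine Lie algebra $\widehat{\mathfrak{sl}}_r$ with fundamental weights $\Lambda_0,\dots,\Lambda_{r-1}$ and null root $\delta$, and realize $V_{-N\omega_1}(\Lambda_0)$ as the Demazure submodule of the level-one integrable module $V(\Lambda_0)$ attached to the affine Weyl group element $w$ for which $w\Lambda_0$ is the extremal weight in the orbit of $\Lambda_0$ that restricts to $-N\omega_1$ on the finite Cartan subalgebra; the congruence condition $i\equiv N\bmod r$, $0\le i<r$, records which coset of the extended affine Weyl group is involved. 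Since $w\ge e$, the module contains the highest weight vector $v_{\Lambda_0}$, so $\charac V_{-N\omega_1}(\Lambda_0)$ is $e^{\Lambda_0}$ times a polynomial in the $e^{-\alpha_j}$.

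Next, invoke \cite[Equation (3.4)]{MR1457389}: this expresses $\charac V_{-N\omega_1}(\Lambda_0)$ as the weighted generating function of the Demazure crystal, i.e.\ of those length-$N$ lattice paths in the level-one perfect crystal $B=\{1,\dots,r\}$ of type $A^{(1)}_{r-1}$ that lie in the relevant Demazure subcrystal, each path weighted by $e^{\mathrm{wt}}$. Then apply \cite[Theorems 6 and 7]{MR1771615}, which identify this weighted path sum, after the substitutions $q=e^{\delta}$ and ${\bf z}=(e^{\Lambda_1-\Lambda_0},\dots,e^{\Lambda_0-\Lambda_{r-1}})$, with a Rogers--Szeg\H{o} polynomial: the paths of fixed content ${\bf k}\in\mathbf{N}^r$ (with $\sum k_j=N$) contribute ${\bf z}^{\bf k}$ times a power of $q$ recording the energy, and summing the energy over all such paths reproduces the $q$-multinomial coefficient $\qbinom{N}{\bf k}_q$. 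Consequently the path sum equals $e^{\Lambda_0-c\delta}\cdot H^{(r)}_N({\bf z},q)$ for some integer shift $c$ that is forced by the grading.

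It then remains to compute $c=d_r(N)$. Because $v_{\Lambda_0}$ has weight exactly $\Lambda_0$ and all weights of $V(\Lambda_0)$ lie below $\Lambda_0$, the shift $c$ must equal the top $q$-degree of $H^{(r)}_N({\bf z},q)$, so that the extreme term of $e^{\Lambda_0-c\delta}H^{(r)}_N({\bf z},q)$ is $e^{\Lambda_0}$ and the whole expression lies in $\mathbf{N}[{\bf z},q^{-1}]$. Since $\deg_q\qbinom{N}{\bf k}_q=e_2({\bf k})$, I would maximize $e_2$ over $\{{\bf k}\in\mathbf{N}^r:\sum k_j=N\}$: the maximum is attained at the balanced composition with $i$ parts equal to $\lceil N/r\rceil$ and $r-i$ parts equal to $\lfloor N/r\rfloor$, and using $e_2({\bf k})=\tfrac12\bigl(N^2-\sum_j k_j^2\bigr)$ a short computation gives
\[
	\max_{{\bf k}} e_2({\bf k}) = \frac{N(N-1)}{2} - \frac{(N-i)(N+i-r)}{2r} = d_r(N)
	,
\]
which reduces to $\tfrac{r-1}{2r}N^2$ when $r\mid N$, consistently with \eqref{max of e2 on simplex}. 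This pins down the prefactor and simultaneously reproves the asserted membership in $\mathbf{N}[{\bf z},q^{-1}]$.

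\textbf{Main obstacle.} The real work is not any single computation but reconciling conventions: \cite{MR1457389} and \cite{MR1771615} use possibly different normalizations of the affine Cartan datum, of the null root, of the energy function, and of the side on which the Demazure operator acts ("$w$" versus "$w^{-1}$"), so the careful step is to verify that the substitutions $q=e^{\delta}$ and ${\bf z}_j=e^{\Lambda_j-\Lambda_{j-1}}$ turn their weighted path sum into precisely our $H^{(r)}_N({\bf z},q)$ — up to the stated monomial — rather than a reflected, rescaled, or cyclically relabelled variant. Cross-checking the top-degree computation above against the closed form for $d_r(N)$, and verifying the two boundary cases $i=0$ (so $r\mid N$) and $i=1$, supplies the consistency check that fixes the conventions; the remainder is bookkeeping.
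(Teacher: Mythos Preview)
Your proposal is correct in outline and cites the same two inputs as the paper, but the route differs in one visible way and your attribution of the two references is inverted. In the paper's proof the chain is \emph{Demazure $\to$ Macdonald $\to$ Rogers--Szeg\H{o}}: Sanderson's \cite[Theorems~6 and~7]{MR1771615} identify $\charac{V_{-N\omega_1}(\Lambda_0)}$ (after the bookkeeping $[N]=t_{-N\omega_1}\cdot\eta_N$ with $\eta_N=((k)^{r-i},(k+1)^i)$) with $e^{\Lambda_0-d_r(N)\delta}$ times the specialized symmetric Macdonald polynomial $P_{[N]}({\bf z};q,0)$, and then \cite[Equation~(3.4)]{MR1457389} is the purely combinatorial identity $P_{[N]}({\bf z};q,0)=H_N^{(r)}({\bf z},q)$. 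You instead read \cite{MR1457389} as a Demazure-crystal/path statement and \cite{MR1771615} as the path-to-Rogers--Szeg\H{o} step; that swap should be corrected, and in particular no energy-function argument is needed.

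What your write-up adds, and the paper's proof omits, is the explicit identification of the shift: the paper simply imports $d_r(N)$ from Sanderson's formula via the composition $\eta_N$, whereas you compute it intrinsically as $\max_{\bf k} e_2({\bf k})$ over integer compositions of $N$ and check that the balanced composition gives exactly $\tfrac{N(N-1)}{2}-\tfrac{(N-i)(N+i-r)}{2r}$. That verification is correct and is a useful sanity check, but it is supplementary rather than a replacement for the Macdonald step.
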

		
		\begin{proof}
			Note that $t_{-\omega_1} = s_1 s_2 \ldots s_{r-1} \sigma^{r-1}$	with $\sigma$ being the automorphism of the Dynkin diagram of $\widehat{\mathfrak{sl}}_r$ which sends $0$ to $1$ (see e.g.~\cite[\S 2]{MR1976581}). Furthermore, following \cite[\S 2]{MR1771615} we have $[N] = t_{-N\omega_1} \cdot \eta_N$ with the convention $\sigma \cdot \eta_N = \eta_N$. Here, $[N] = (N,0,\ldots,0) \in \mathbf{N}^r$ denotes the one-row Young diagram, and $\eta_N$ the smallest composition of degree $N$. That is, when $N = kr +i$ with $0\leq i<r$, we have $\eta_N = ((k)^{r-i},(k+1)^i) \in \mathbf{N}^r$.
			Then, by \cite[Theorem 6 and 7]{MR1771615}\footnote{There seems to be a missprint in \cite[\S 4]{MR1771615}. Namely, the image $\pi(q)$ should equal $q = e^\delta$, not $q = e^{-\delta}$.}  and \cite[Equation (3.4)]{MR1457389} we have
			\[
				\charac{V_{-N \omega_1}(\Lambda_0)} = e^{\Lambda_0 - d_r(N)\delta} \cdot P_{[N]}({\bf z};q,0) = e^{\Lambda_0 - d_r(N)\delta} \cdot H_N^{(r)}({\bf z} , q)
				,
			\]
			where $P_{[N]}({\bf z};q,0)$ denotes the specialized symmetric Macdonald polynomial (see \cite[Chapter VI]{MR1354144}) associated to the partition $[N]$.
		\end{proof}
		
		\begin{exl}
			For $r=2$ consider the specialization of the Rogers--Szeg\H{o} polynomial $H_4^{(2)}(z,z^{-1},q)$ and the Demazure module  $V_{-4 \omega_1}(\Lambda_0)$. Via Demazure's character formula we obtain
			\begin{align*}
				\charac{V_{-4 \omega_1}(\Lambda_0)}
					& = (e^{4 (\Lambda_1 - \Lambda_0)}+e^{-4 (\Lambda_1 - \Lambda_0)})e^{-4\delta}\\
					 & \quad +(e^{2 (\Lambda_1 - \Lambda_0)} + e^{-2 (\Lambda_1 - \Lambda_0)})(e^{-\delta}+e^{-2\delta}+e^{-3\delta}+e^{-4\delta}) \\
					 & \quad+ e^{0 (\Lambda_1 - \Lambda_0)}(e^{0\delta} + e^{-\delta}+2e^{-2\delta}+e^{-3\delta}+e^{-4\delta})
				.
			\end{align*}
			Furthermore, by definition
			\begin{align*}
				H_4^{(2)}(z,z^{-1},q)
					& = (z^4 + z^{-4})q^0  +(z^2 + z^{-2})(q^3 + q^2 +q +q^0) \\
					& \quad +z^0 (q^4+q^3 +2 q^2 +q +q^0)
				.
			\end{align*}
			Hence, with ${\bf z} = (e^{\Lambda_1 - \Lambda_0}, e^{\Lambda_0 - \Lambda_1})$, $q=e^\delta$ and $d_2 (4) =4$ we have the equality
			\[
				\charac{V_{-4 \omega_1}(\Lambda_0)} = e^{\Lambda_0 - d_2 (4)\delta} \cdot H_4^{(2)}(z,z^{-1},q)
			\]
			as claimed.
		\end{exl}
		
		The coefficient $l$ in $e^{-l \delta}$ is commonly referred to as the degree of a monomial in $\charac{V_{-N \omega_1}(\Lambda_0)}$.
		When $d$ is a scaling element, the polynomial $\charac{V_{-N \omega_1}(\Lambda_0)} |_{\mathbf{C}d} \in \mathbf{N}[e^\delta]$ is called the basic specialization of the Demazure character (see \cite[\S 1.5, 10.8, and 12.2]{MR1104219} for the terminology in the context of integrable highest weight representations of affine Kac-Moody algebras). Based on the relation described in \autoref{demazure character as Rogers--Szegoe}, we summarize our main results, \autoref{gauss} and \autoref{macmahon}, in this language.
		
		\begin{cor}
		\label{demazure basic specialization}
			For $r \geq 2$ and $N \in \mathbf{N}$ consider the $\widehat{\mathfrak{sl}}_r$ Demazure module $V_{-N \omega_1}(\Lambda_0)$. Let $\Gamma_{N,r}$ be a random variable with probability generating function $\E{}{e^{\Gamma_{N,r}\delta}} = r^{-N} \cdot \charac{V_{-N \omega_1}(\Lambda_0)} |_{\mathbf{C}d} \in \mathbf{Q}[e^\delta]$.
			Then, for $0 \leq i < r$, $i \equiv N \mod r$ we have
			\begin{align}
				\E{}{\Gamma_{N,r}}
				&= \frac{(r+1)N(N-1) - 2(N-i)(N+i-r)}{4r}, \\
				\Var(\Gamma_{N,r})
				&= \frac{(r-1)(r+1)}{72 r^2} N(N-1)(2N+5)
				.
			\end{align}
			For fixed $r$ and $N \rightarrow \infty$, the distribution of the random variable
			\begin{align}
				\frac{\Gamma_{N,r} - \E{}{\Gamma_{N,r}}}{\Var(\Gamma_{N,r})^{1/2}}
			\end{align}
			converges weakly to the standard normal distribution $\mathcal{N}(0,1)$.
			Furthermore, let $\mathfrak{S}_N$ be the symmetric group on $N$ elements, and for a permutation $\pi \in \mathfrak{S}_N$ denote by $\inv(\pi)$ its number of inversions and by $\des(\pi)$ the cardinality of its descent set $D(\pi)$. Then, with $a_{N,r} = N(N-1)/2 - (N-i)(N+i-r)/2r$
			\begin{align}
			\label{demazure weighted inversion statistic}
				\frac{\charac{V_{-N \omega_1}(\Lambda_0)}|_{\mathbf{C}d}}{e^{\Lambda_0 - d_r (N) \delta}} = \sum_{\pi \in \mathfrak{S}_N}\binom{N+r-1-\des(\pi)}{N}  e^{\inv(\pi)\delta}
				.
			\end{align}
			For fixed $N$ and $r \rightarrow \infty$ we have
			\begin{align}
			\label{demazure length statistic}
				\frac{N !}{r^N} \cdot \frac{\charac{V_{-N \omega_1}(\Lambda_0)}|_{\mathbf Cd}}{e^{\Lambda_0 - d_r (N) \delta}} \to \sum_{\pi \in \mathfrak{S}_N} e^{\inv(\pi)\delta}  = [N]_{e^\delta}!
				.
			\end{align}
		\end{cor}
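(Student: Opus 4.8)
The plan is to deduce everything from \autoref{demazure character as Rogers--Szegoe} together with \autoref{gauss} and \autoref{macmahon} (the latter two via \autoref{mean variance galois}), the only real work being the bookkeeping of the basic specialization. Recall that the scaling element $d$ satisfies $\Lambda_0(d)=0$ and $\alpha_i(d)=\delta_{i0}$, hence $\Lambda_j(d)=0$ for every fundamental weight $\Lambda_j$ of $\widehat{\mathfrak{sl}}_r$ and $\delta(d)=1$. Restricting the identity $\charac{V_{-N\omega_1}(\Lambda_0)} = e^{\Lambda_0 - d_r(N)\delta}\cdot H_N^{(r)}({\bf z},q)$ of \autoref{demazure character as Rogers--Szegoe} to $\mathbf{C}d$ therefore sends each coordinate $z_j$ of ${\bf z}$ (of the form $e^{\Lambda_a - \Lambda_b}$) and also $e^{\Lambda_0}$ to $1$, while $q=e^\delta$ survives; so $\charac{V_{-N\omega_1}(\Lambda_0)}|_{\mathbf{C}d} = e^{-d_r(N)\delta}\cdot H_N^{(r)}({\bf 1},e^\delta) = e^{-d_r(N)\delta}\cdot G_N^{(r)}(e^\delta)$. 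Reading the degree of a monomial as the exponent $l$ in $e^{-l\delta}$, this exhibits $\Gamma_{N,r}$ as distributed like $d_r(N)-G_{N,r}$, where $G_{N,r}$ is the random variable of \autoref{gauss}.

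Granting this identification in law, the first three assertions are immediate. For the mean, $\E{}{\Gamma_{N,r}} = d_r(N) - \E{}{G_{N,r}} = d_r(N) - \frac{r-1}{4r}N(N-1)$ by \autoref{mean variance galois}; substituting $d_r(N)=\frac{N(N-1)}{2} - \frac{(N-i)(N+i-r)}{2r}$ and clearing the denominator $4r$ gives, after the one-line simplification $2rN(N-1)-(r-1)N(N-1)=(r+1)N(N-1)$, the stated value $\frac{(r+1)N(N-1)-2(N-i)(N+i-r)}{4r}$; this elementary identity is the only computation required. The variance is invariant under the affine reflection $x\mapsto d_r(N)-x$, so $\Var(\Gamma_{N,r})=\Var(G_{N,r})$, the value recorded in \autoref{gauss}. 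Finally, the standardized variable $(\Gamma_{N,r}-\E{}{\Gamma_{N,r}})/\Var(\Gamma_{N,r})^{1/2}$ is the negative of $(G_{N,r}-\E{}{G_{N,r}})/\Var(G_{N,r})^{1/2}$, and since the standard normal law is symmetric, its weak convergence to $\mathcal{N}(0,1)$ follows from \autoref{gauss}.

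The remaining two displays follow directly from \autoref{macmahon}. By \autoref{demazure character as Rogers--Szegoe}, $\charac{V_{-N\omega_1}(\Lambda_0)}/e^{\Lambda_0-d_r(N)\delta} = H_N^{(r)}({\bf z},q)$; restricting to $\mathbf{C}d$ (which, as above, sends ${\bf z}\mapsto{\bf 1}$ and keeps $q=e^\delta$) turns the right-hand side into $G_N^{(r)}(e^\delta)$, and \autoref{macmahon} rewrites this as $\sum_{\pi\in\mathfrak{S}_N}\binom{N+r-1-\des(\pi)}{N}e^{\inv(\pi)\delta}$, which is \eqref{demazure weighted inversion statistic}. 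Multiplying by $N!/r^N$ and letting $r\to\infty$, the second half of \autoref{macmahon} with $q=e^\delta$ yields \eqref{demazure length statistic}. There is no genuine obstacle here: the mathematical content lies entirely in \autoref{demazure character as Rogers--Szegoe}, \autoref{gauss} and \autoref{macmahon}. The one point that needs care is the basic specialization itself, namely verifying that restriction to $\mathbf{C}d$ annihilates the classical part of every exponent — in particular $e^{\Lambda_0}$ and all the $e^{\Lambda_a-\Lambda_b}$ — while faithfully recording the $\delta$-degree; after that, the mean formula is the short algebraic simplification above and the rest is a direct appeal to the two main theorems.
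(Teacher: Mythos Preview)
Your proof is correct and follows exactly the approach the paper intends: the corollary is stated in the paper as a direct summary of \autoref{gauss} and \autoref{macmahon} in the Demazure language via \autoref{demazure character as Rogers--Szegoe}, with no separate proof given. You supply the details the paper omits---the basic specialization sending each $e^{\Lambda_a-\Lambda_b}$ and $e^{\Lambda_0}$ to $1$, the identification $\Gamma_{N,r}\overset{d}{=}d_r(N)-G_{N,r}$, the one-line simplification for the mean, and the symmetry of $\mathcal N(0,1)$---all of which are correct and constitute precisely the bookkeeping the paper leaves implicit.
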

	
			It is interesting to continue the investigation of the basic specialization of Demazure characters including Kac-Moody algebra types different from $A$. The starting point should be Ion's article \cite{MR1953294} which is a generalization of Sanderson's work \cite{MR1771615}, and one should also consider \cite{MR1607188}.
			In view of \eqref{demazure length statistic} and \cite{MR1953294,MR1607188} we propose the following conjecture 

			\begin{cnj}
				Let $X=A,B,D$ and $r \in \mathbf{N}$. Consider the $\widehat{X}_r$ Demazure module $V_{-N \omega_1}(\Lambda_0)$ and let $d_r^X(N)$ be the maximal occuring degree. For fixed $N$ and $r \to \infty$ it holds
				\[
					\frac{\# W(X_N)}{\dim (V(\omega_1)^{\otimes N} )} \cdot \frac{\charac{V_{-N \omega_1}(\Lambda_0)}|_{\mathbf Cd}}{e^{\Lambda_0 - d_r^X (N) \delta}} \to \sum_{w \in W(X_N)} e^{l(w)\delta}
					.
				\]
			Here, $W(X_N)$ is the Weyl group of finite type $X_N$, $l : W(X_N) \to \mathbf{N}$ is the length function, and $V(\omega_1)$ denotes the standard representation of the finite-dimensional Lie algebra of type $X_r$. 
			\end{cnj}
			
			Note that \eqref{demazure length statistic} proves the case $X=A$. It is interesting to investigate in an analogue of \eqref{demazure weighted inversion statistic} for the types $B$ and $D$.
			
		\subsection{Descent-inversion statistics}
		
			Stanley \cite{MR0409206} derived a generating function identity for the joint probability generating function of the descent-inversion statistic on the symmetric group $\mathfrak{S}_N$:
			\[
				\sum_{N=0}^\infty \sum_{\pi \in \mathfrak{S}_N} t^{\des(\pi)}q^{\inv(\pi)} \frac{u^N}{[N]_q !} = \frac{1-t}{\mathrm{Exp}_q (u(t-1))-t}
				,
			\]
			where $\mathrm{Exp}_q (x) = \sum q^{\binom{n}{2}} x^n / [n]_q ! $. Motivated by \autoref{macmahon} we define a weighted joint probability generating function of the descent-inversion statistic on the symmetric group $\mathfrak{S}_N$ by
			\begin{equation}
			\label{deformed galois}
				G_N^{(r)}(q,t) =  \sum_{\pi \in \mathfrak{S}_N} \binom{N + r-1-\des(\pi)}{N}  t^{\des(\pi)} q^{\inv(\pi)}
				.
			\end{equation}
			Note that $G_N^{(r)}(q,1) = G_N^{(r)}(q)$. It is interesting to investigate the generating function
			\begin{equation}
			\label{genfunc deformed galois}
				\sum_{N=0}^\infty G_N^{(r)}(q,t) \frac{u^N}{[N]_q !} ,
			\end{equation}
			possibly with refinements depending on $r$ and $t$, and the $t$-deformed generalized Galois number $G_N^{(r)}(q,t)$ itself.
	
	\section{Acknowledgements}
		
		This work was supported by the Swiss National Science Foundation (grant PP00P2-128455), the National Centre of Competence in Research ``Quantum Science and Technology,'' the German Science Foundation (SFB/TR12, SPP1388, and grants \mbox{CH~843/1-1}, \mbox{CH~843/2-1}), and the Excellence Initiative of the German Federal and State Governments through the Junior Research Group Program within the Institutional Strategy ZUK 43.
		
		The second author would like to thank Matthias Christandl for his kind hospitality at the ETH Zurich, and Ryan Vinroot for helpful conversations.
		
		We are indebted to the referees for pointing out an error in the proof of \autoref{gauss} in an earlier version of this article.
		
	\bibliographystyle{amsplain}
	\bibliography{galois}
	
\end{document}